\newtheorem{thm}{Theorem}
\newtheorem{lemma}{Lemma}
\let\paragraph\subsection
\title{Characteristic Topological Invariants} 
\author{Oliver Knill}
\date{February 5, 2023}
\address{Department of Mathematics \\ Harvard University \\ Cambridge, MA, 02138 }
\subjclass{}
\keywords{Topological invariant, Green function, Higher characteristics }
\begin{document}
\maketitle

\begin{abstract}
The higher characteristics $w_m(G)$ for a finite abstract simplicial complex $G$
are topological invariants that satisfy $k$-point Green function identities and
can be computed in terms of Euler characteristic in the case of closed manifolds,
where we give a new proof of $w_m(G)=w_1(G)$. Also the sphere formula generalizes:
for any simplicial complex, 
the total higher characteristics of unit spheres at even dimensional simplices is
equal to the total higher characteristic of unit spheres at odd dimensional simplices.
\end{abstract}

\section{Summary}

\paragraph{}
We prove {\bf $k$-point Green function formulas} 
$$ w_m(G)=\sum_{x_j \in G} g_m(x_1,\dots,x_k) 
        = \sum_{x \in G^k} \prod_{j=1}^k w(x_j) w_m(\bigcap_{j=1}^k U(x_j)) $$
for the {\bf $m$'th characteristic} of $A \subset G$
$$ w_m(A) = \sum_{x \in A^m, \bigcap_j x_j \in A} \prod_{j=1}^m w(x_j)  \; , $$
where $w(x)=(-1)^{{\rm dim}(x)}$ and where $G$ is a finite abstract simplicial complex.
The {\bf stars} $U(x)=\{ y, x \subset y\}$ define a topological base of a finite non-Hausdorff
{\bf topology} on $G$. For two arbitrary open sets $U,V$, the {\bf valuation formula} 
$$  w_m(U \cup V) + w_m(U \cap V) = w_m(U) + w_m(V) $$
holds for all $m \geq 1$. This makes the higher characteristics $w_m$ 
{\bf topological invariants}: they agree for homeomorphic complexes. It also makes more
general energized versions of $w_m$ sheaf ready. For {\bf manifolds}, $w_m(M)=w_1(M)$. 

\begin{figure}[!htpb]
\scalebox{0.5}{\includegraphics{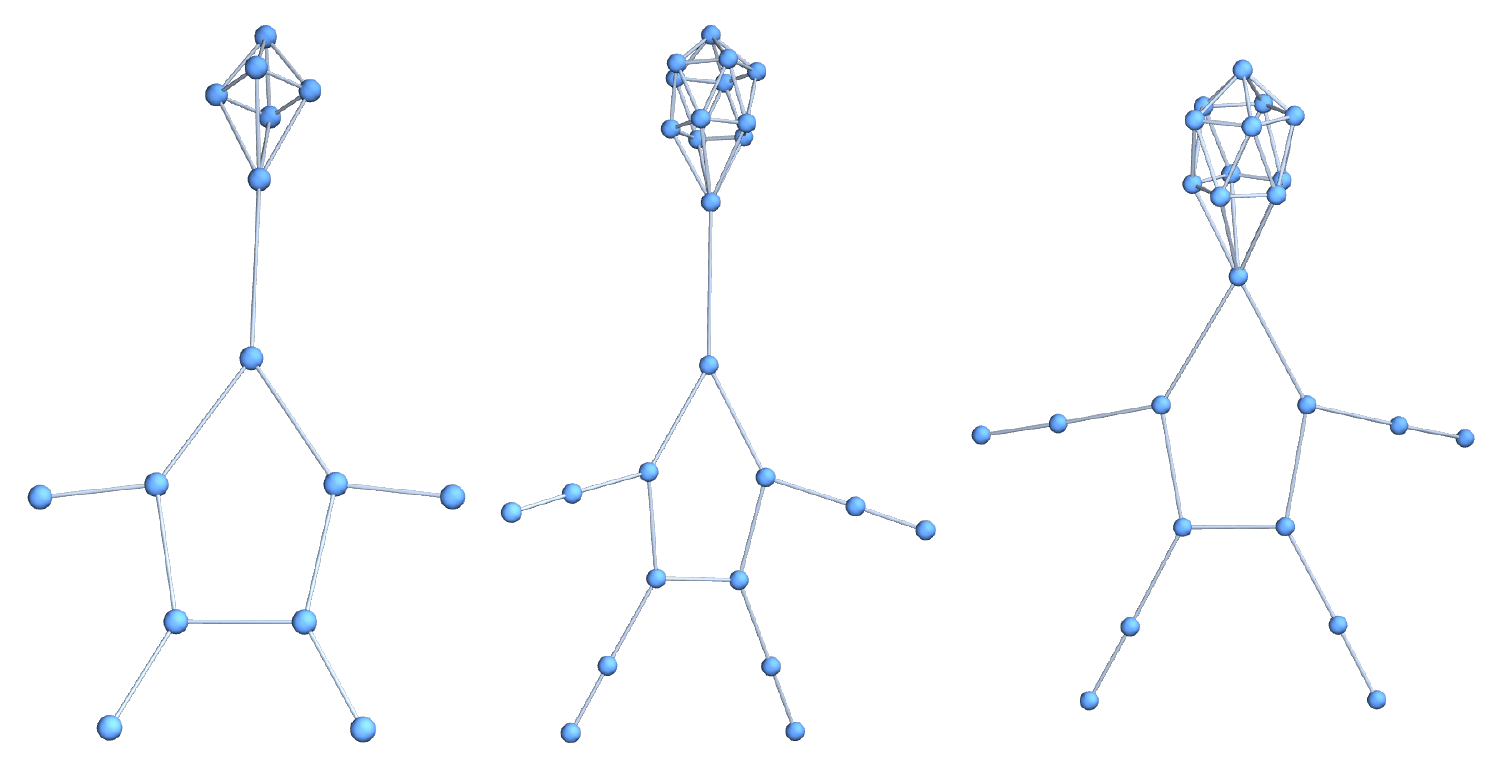}}
\caption{
These complexes $G,H,K$ are all homotopic to a {\bf wedge sum} of a 2-sphere and a 1-sphere. 
The first two complexes $G,H$ are homeomorphic. The complex $K$ however is not homeomorphic to $G$.
We have Euler $\omega_1(G)=\omega_1(H)=\omega_1(K)=1$ and Betti vector $b_1(G)=b_1(H)=b_1(K)=(1,1,1)$.
For Wu $\omega_2(G)=\omega_2(H)=11, b_2(G)=b_2(H)=(0,0,11,1,1)$ and $\omega_2(K)=9, b_2(K)=(0,0,11,3,1)$.
Then $\omega_3(G)=\omega_3(H)=-29, b_3(G)=b_3(H)=(0,0,0,31,3,2,1)$ and
$\omega_3(K)=-23, b_3(K)=(0,0,0,31,12,5,1)$. 
}
\end{figure}

\paragraph{}
We use short hand $w_m(A)=\sum_{X \in A^m, \bigcap X \in A} w(X)$, summing over 
$X=(x_1,\dots,x_m) \in A^m$ with $\bigcap X \in A$, meaning $\bigcap_{j=1}^m x_j \in A$ and 
$w(X)=\prod_{j=1}^m (-1)^{{\rm dim}(x_j)}$. 
The {\bf Euler characteristic} is $w_1(A)$ and the {\bf Wu characteristic} is $w_2(A)$. 
{\bf $k$-point Green function} maps a k-point configuration $X \in G^k$ to 
$g_m(X)=w(X) w_m(U(X))$ with $U(X)=\bigcap_{j=1}^k U(x_j))$.
The {\bf energy formula} $w_m(G) = \sum_{X \in G^k} g_m(X)$
involves open sets in $G$ and so is sheaf theoretic. Unlike in $w_m(G)$, 
there is no $\bigcap X \in G$ assumption in the Green part. 
The closure $B(x)$ of a {\bf star} $U(x)$ is called the {\bf unit ball}. Its boundary 
$S(x)$ is the {\bf unit sphere} also known as {\bf link}. 
The {\bf local valuation formula} 
$$ w_m(B(x)) = w_m(U(x)) - (-1)^m w_m(S(x)) \;  $$
holds for all stars $U(x)$, but in general not for arbitrary open sets $U$ with 
with $B(X)=\overline{U(X)}$ and $S(X)=B(X)\setminus U(X)=\delta U(X)$. 

\paragraph{}
The special {\bf one-particle case} $k=1$ gives for all $m \geq 1$
a Gauss-Bonnet or Poincar\'e-Hopf type formula 
$$  w_m(G)=\sum_{x \in G} w(x) w_m(U(x)) \; , $$ 
which reduces the computation of $w_m(G)$ to local expressions.
We can think of $g_m(x)$ as a ``curvature" or interpret it as an ``index". Note that the Gauss-Bonnet
type formula for open sets does not generalize if $G$ is replaced by an open set $U$. It only
works if $G$ is replaced by a smaller simplicial complex. The unit balls $B(x)=\overline{U(x)}$ 
for example are by themselves again simplicial complexes and we also have
$w_m(G) = \sum_{x \in G} w(x) w_m(B(x))$. 

\paragraph{}
For all complexes $G$, the {\bf sphere formula} 
$$  \sum_{x \in G} w(x) w_m(S(x))=0  $$
holds. We have seen that in \cite{KnillEnergy2020,Sphereformula} for $m=1$.
It follows from the local valuation formula and the {\bf ball formula}
$w_m(G) = \sum_{x \in G} w(x) w_m(B(x))$. This is can be useful for large complexes,
where we can use the ball formula again for computing $w_m(B(x))$, especially if many $B(x)$ have
the same type or are known. The sphere formula for example proves that if $G$ is a complex for 
which the  characteristic $w_m$ of all spheres $S(x)$ is constant $c \neq 0$ then $w_m(G)=0$. This applies
for odd-dimensional manifolds or more generally for odd-dimensional {\bf Dehn-Sommerville d-manifolds}
$\mathcal{X}_d$ \cite{DehnSommerville} recursively defined by the property
 $\chi(G)=1+(-1)^d$ and that all unit spheres satisfy $S(x) \in \mathcal{X}_{d-1}$. 
The induction starts with $\mathcal{X}_{-1}=\{0\}$, where the {\bf void} $0=\{\}$ is the empty complex, 
which is also the $(-1)$-sphere. 

\paragraph{}
Unlike manifolds, Dehn-Sommerville
spaces $\mathcal{X}=\bigcup \mathcal{X}$ define a {\bf monoid} under the {\bf join operation}
$G+H=G \cup H \cup \{ xy, x \in G, y \in H \}$. It contains the 
{\bf sphere monoid} $\mathcal{S}$ of all spheres in which the $(-1)$ sphere $0$ is the zero 
element. The join of a $p$-spheres with a $q$-sphere is a $(p+q+1)$-sphere 
using induction and $S_{G+H}(x)=S_G(x) + H$ for $x \in G$ or $S_{G+H}(x)=G+H(x)$ for $x \in G$.  
It is natural to look at generalized $d$-manifolds which are complexes for which all 
unit spheres are in $\mathcal{X}_{d-1}$. This is a larger class than $d$-manifolds
but which like manifolds is no sub-monoid of the monoid $(\mathcal{G},+)$ of all complexes.

\paragraph{}
We have proven the energy formula in the case $m=1,k=2$ already 
as $\chi(G) = \sum_{x,y} g(x,y)$, where 
$$ g(x,y) = w(x) w(y) \chi(U(x) \cap U(y)) $$ 
are the matrix entries of the inverse of the {\bf connection matrix} $L(x,y)=1$ if 
$x \cap y \neq \emptyset$ and $L(x,y)=0$ else. 
In the case $m=1,k=1$, we have seen the super trace expression 
$\chi(G)=w_1(G)=\sum_{x \in G} w(x) \chi(U(x))$ involving $g(x,x) = \chi(U(x))$. 
We then generalized the energy formula to energized complexes, where $w(x)$ was
replaced by an arbitrary ring-valued function $h(x)$. We had explored such 
{\bf energized complexes} also in the case when $h$ was division algebra valued. 

\paragraph{}
The energy theorem can more generally be seen as a linear 
transformation which turns an {\bf interaction energy function}
$h_m: G^m \to R$ on intersecting points to a
Green function $g_m: G^k \to R$ on k-tuples of {\bf open stars}. The parameters $m$ and $k$
can be arbitrary positive integers. The energy 
$$ w_m(A) = \sum_{X \in A^m, \bigcap X \in A} h_m(X) $$ 
then defines {\bf $k$-point potential values} 
$$ g_m(X) = w(X) w_m(U(X)) $$ 
or $X \in G^k$. We then still have for any $m,k \geq 1$ the relation
$$ w_m(G) = \sum_{X \in G^m, \bigcap X>0} h_m(X) = \sum_{X \in G^k} g_m(X) \; . $$ 
Of course, for a general choice of the interaction energy $h$, there is no topological invariance.
Indeed, already requiring the weaker combinatorial property of being invariant under
Barycentric refinements forces $h$ to be of the form $h(x)=w(x)$. 

\paragraph{}
The more general picture with {\bf energized local interaction} $h_m$ explains, 
why a positive semi-definite $h$ produces positive semi-definite $g$: if $h_m$ is
zero except for some fixed $X_0$, where $h_m(X_0)=1$, then $g_m: X \in G^k \to g_m(X)$ 
is positive semi definite form. Convex combinations of positive semi definite forms $h_m$ 
in $m$ variables are mapped into convex combinations of positive semi definite forms $g_m$ 
in $k$ variables. In the case $m=1,k=2$ for example, where 
the Green function matrix $g(x,y)$ is the inverse of the connection matrix $L(x,y)$, we have
a duality between positive definite quadratic forms. In this case, $L$ 
and $g$ are even isospectral. They gave isospectral positive definite
integral quadratic forms. One can write $L(x,y) = w_m(V(x) \cap V(y))$ with $V(x)=\overline{ \{x\} }$. 
For $|h(x)|=1,k=2,m=1$ we had them inverses of each other. 
\cite{CountingMatrix,GreenFunctionsEnergized,EnergizedSimplicialComplexes}.

\paragraph{}
As for references, we should add that the subject has a large cultural background starting with 
the Max Dehn and Poul Heegaard \cite{DehnHeegaard} who first defined abstract simplicial complexes in 1907. 
Together with finite topological spaces as first considered by Pavel Alexandrov \cite{Alexandroff1937}
in 1937, this produces a powerful framework. We pointed out some reasons
in \cite{Sphereformula,FiniteTopology} why we do not want to look at geometric realizations. An example
is that the geometric realization of the double suspension of a Poincar\'e homology sphere is 
homeomorphic to a 5-sphere even so the combinatorial complexes are obviously not homeomorphic in 
a discrete sense. To remain in the finite topology allows to avoid difficulties from the continuum.

\paragraph{}
The mathematics of  Dehn,Heegard or Alexandrov  is not only 
mathematically simpler as it is only combinatorics of finite sets of sets only, we can 
also branch off into area, where the continuum is awkward. 
For example, we can look at the {\bf cohomology of open sets} in a simplicial complex. This is 
richer than the cohomology of closed sets. Since the Betti vector of a single set 
$\{x\}$ (which by definition is an open set $\{ x \}$ with locally maximal $x$ 
in a simplicial complex) is the basis vectors $e_{|x|}$, where $|x|$ is the 
cardinality of the set, we can realize any Betti vector with open sets. We would not know how
to realize a given Betti vector with a simplicial complex. 
In the continuum, the cohomology of an open set is usually only considered in the 
sense of a limit of cohomologies of compact subsets. Even the homotopy is different. 
An open interval $\{ \{1,2\} \}$ for example has Euler characteristic $-1$
and is not homotopic to a single point $\{ \{1\} \}$ which has Euler characteristic $1$. 
These two spaces should not be considered homotopic, for any sensible definition of homotopy. 
But we can identify $A=\{ \{1,2\} \}$ for example with $B=\{ \{2\},\{1,2\},\{2,3\} \}$ which 
is also an open interval. The open sets $A,B$ are homeomorphic. 
Their boundaries $\delta A = \overline{A} \setminus A = \{ \{1\},\{2\} \}$  
and $\delta B = \overline{A} \setminus B = \{ \{1\},\{2\} \}$ are both $0$-spheres. 
Unlike for general finite set of sets (sometimes called multi-graphs), 
the cohomology and topology works well for both open and closed sets. Almost everything we
describe here fails for general set of sets which are neither closed nor open. 

\section{Notations}

\paragraph{}
A {\bf finite abstract simplicial complex} $G$ is a finite set of sets, closed
under the operation of taking non-empty subsets. $G$ carries a finite topology $\mathcal{O}$, 
in which the set of all {\bf stars} $U(x) = \{y, x \subset y\}$ is the {\bf topological base}.
If $V=\bigcup_x x$ is the {\bf vertex set}, the collection of vertex stars 
$U(x)$ with $x=\{v\}$ with $v \in V$ are a {\bf topological subbase} which when closed under 
intersection produces the topological base and when closed under
intersection and union produces the topology. The closed
sets in the topology are the sub-simplicial complexes.
If $x \subset y$, then $U(y) \subset U(x)$. A map $f: G \to H$ between simplicial
complexes is {\bf continuous} if $f^{-1}(U)$ is open in $G$ if $U$ is open in $H$. A {\bf simplicial
map} is a map from $V(G)$ to $V(H)$ that lifts to an order preserving map $G \to H$. Simplicial
maps are continuous; however not all continuous maps are simplicial maps. An example is a constant 
map onto a simplex $c$ of positive dimension.

\paragraph{}
The topology on $G$ is {\bf Kolmogorov} (T0) but neither {\bf Fr\'echet} (T1), nor {\bf Hausdorff} (T2). 
As any finite topology, it is {\bf Alexandroff}, meaning that there are smallest 
neighborhoods $U(x)$ of every $x \in G$. These are the {\bf stars} $U(x) = \{ y \in G, x \subset y\}$. 
The topology is {\bf Zariski type} because the closed sets in the topology agreeing with sub-simplicial 
complexes of $G$. In the case when $G$ is the Whitney complex of a graph $(V,E)$, formed by the vertex
sets of complete subgraphs, then subgraphs define closed sets. Every $G$ again defines a graph, where
$\Gamma=(V,E)=\{ (x,y), x \subset y \; , {\rm or} \; , \; y \subset x \}$. 
The Whitney complex $G$ of this graph is the {\bf Barycentric refinement} $G_1$ of $G$. 
The topology has the desired connectivity properties of $G$. 
The just described graph obtained from $G$ has the same connectivity properties than the topology
of $G$. The topology induced from the geodesic distance of the graph would produce the 
{\bf discrete topology} on $G$ and render $G$ completely disconnected. The non-Hausdorff property is
inevitable. In the case when $G$ comes from a graph $\Gamma$, the graph $\Gamma$ 
is the \v{C}ech nerve of the {\bf topological subbase} of vertex stars. 
The \v{C}ech nerve of the {\bf topological base} of all stars is $\Gamma_1$, 
the Barycentric refinement of $\Gamma$. 

\paragraph{}
Elements in $G$ also known as {\bf simplices} or {\bf faces} or simply called {\bf sets}. 
$G$ is is a set of sets of $V$ and the topology is a set of sets in $\mathcal{O}$. 
The closure $\overline{A}$ 
of an arbitrary set $A \subset G$ is the smallest closed set in $G$ containing $A$. We need to
distinguish three different things: (i) the set $x \in G$ as an element or point of $G$, (ii) the subset 
$A(x)=\{x\} \subset G$ and (iii) its closure $K(x)=\overline{\{ x\} } =\{ y, y \subset x \} \subset G$,
which is the simplicial complex generated by $\{x\}$. The set $A(x)$ is open only if $x$ is a locally
maximal simplex (meaning not contained in any larger simplex) and closed only if $x$ has 
dimension $0$. In general, the set $A(x)=\{x\} \subset G$ is neither open nor closed 
but $x$ always defines two natural sets, the {\bf open set} $U(x)$ and the {\bf closed set} $K(x)$. 
The open set $U(x)$ is the smallest open set containing $x$, the closed set $K(x)$ is the smallest 
closed set containing $x$. 
$U(x)$ is the {\bf star} and $K(x)$ the {\bf core}. The closure $B(x)$ of $U(x)$ is the {\bf unit ball}, 
and its boundary $S(x)=B(x) \setminus U(x)$ of $U(x)$ is the {\bf unit sphere}. 
The {\bf dimension} ${\rm dim}(x)$ of $x \in G$ is defined as $|x|-1$, where $|x|$ is the {\bf cardinality} 
of $x$.  We write $X=(x_1,\dots,x_k) \in G^k$ to address a {\bf $k$-tuple} of 
points $x_j \in G$, and define $w(x)=(-1)^{{\rm dim}(x)}$ and $w(X)=\prod_{j=1}^k w(x_j)$. 
We write shorthand $\bigcap X$ for $\bigcap_{j=1}^k x_j$. Similarly,  
we write $\bigcup X$ for $\bigcup_{j=1}^k x_j$. 
We usually do not care about the order of the elements in $X$ but allow that the same element appears
multiple times. The configuration $X=(x,x,x, \dots, x)$ for example is a $k$-point configuration in which 
all points are the same. 

\paragraph{}
We often look at functions $h:G \to R$, where $R$ is some algebraic object like a {\bf ring}.
It should have an additive structure that is commutative (with respect to addition). 
It could be $\mathbb{Z}$ or a finite Abelian group for example, it can also have more structure like a 
vector space over a field or an operator algebra. 
Here we look at rings like $R=\mathbb{Z}$ but the multiplicative structure of the ring does not 
really enter. It could be a division algebra like the quaternions for example, where the multiplication is
not commutative. As we have expressions like $w(x) w_m(U(x))$ in our main result, a multiplication with 
$1$ or $-1$. If $R$ is an additive group, then $-r$ denotes the additive inverse of $r$ in the group. 
As the theme is part of a finitist approach to mathematics, we prefer finite objects. Even in the
case $r=\mathbb{Z}$, the {\bf range} of a function $f: G \to R$ is always finite so that we still deal
with finite objects, despite the fact that there is no a priori cap on the size of $R$.
Having a ring rather than only an additive group has the advantage that on can also look at objects
like determinants (or in the non-commutative ring case {\bf Dieudonn\'e determinants}) which can play a role 
in the vicinity of what we do here. Having a ring is a familiar frame work in other parts of mathematics.

\paragraph{}
Rather than have a fixed ring $R$ it is possible to attach a ring $R(x)$ to every open set $R(U)$.
The ring $R(x)$ is called the {\bf ring of sections}. To fix the relations, one needs {\bf restriction maps}. 
Because the open sets $U(x)$ are minimal, the ring $R(x)$ can be called the {\bf stalk} at $x$. 
Its elements are the {\bf germs} at $x$.  
Since $x \subset y$ implies $U(y) \subset U(x)$, a {\bf sheaf} is determined by giving 
{\bf restriction maps} $r(x,y): U(x) \to U(y)$ satisfying {\bf pre-sheaf properties} $r(x,x) = Id$ and 
$r(y,z) \circ r(x,y)=r(x,z)$ if $x \subset y \subset z$. Having all the germs $R(x)$ and the transition maps 
fixed, one already has the existence (called {\bf gluing}) and uniqueness (called {\bf locality}) 
which are necessary to have a {\bf sheaf} and not only a pre-sheaf. 
An example is $w(x) = (-1)^{{\rm dim}(x)}$ with restriction maps 
$r(x,y) = w(x) w(y)$ for $x \subset y$. There is a unique extension of $w$ to all open sets 
$w(U)=\prod_{x \in U} w(x)$. In analogy to $\chi(U)=w_1(U)=\sum_{x \in U} w(x)$, we have called this
the {\bf Fermi characteristic} $\phi(U)$ of $U$ \cite{KnillEnergy2020}. It is equal to 
${\rm det}(L)$ with the connection matrix $L(x,y)=\chi(K(x) \cap K(y))$, the inverse of the 
Green function matrix $g(x,y) = w(x) w(y) \chi(U(x) \cap U(y))$. 

\paragraph{}
For a general function $h: G^m \to R$, where we write $h(X)=h(x_1, \cdots, x_m)$,
we can define $w_m(G) = \sum_{X \in G^m, \bigcap X \in G} h(X)$ and 
more generally, 
$$  w_m(A) =\sum_{X \in A^m,\bigcap X \in A} h(X) $$ 
for $A \subset G$ for sets $A$ that are not necessarily simplicial complexes.
No symmetry like that $h(X)=h(Y)$ if $X$ and $Y$ are permutations is assumed.
We mostly take $h(X)=\prod_{j=1}^m (-1)^{{\rm dim}(x)}$ because this assures that
$w_m(G)$ are {\bf combinatorial invariants}, meaning invariant under Barycentric refinements. 
For $k$ arbitrary stars $U(x_1),\dots, U(x_m)$, define $U(X)=\bigcap_{j=1}^k U(x_j)$ and 
$\omega_m(U) = \sum_{X \in G^m, \bigcap X \in G} w(U(X))$. 
The first characteristic $w_1(G) = \sum_{x \in G} w(x)$ is the {\bf Euler characteristic} of
$G$, the second characteristic $w_2(G) = \sum_{x,y,x \cap y \in G} w(x) w(y)$ 
is the {\bf Wu characteristic} and the third characteristic is 
$$ w_3(G) =  \sum_{x,y,z, x \cap y \cap z \in G} w(x) w(y) w(z) \; . $$
While for any subsets $A,B$, we have for $m=1$ the property $w_1(A \cup B)=w_1(A) + w_1(B)-w_1(A \cap B)$, 
this valuation formula fails for $m>1$ for general $A,B$, even for closed $A,B$ in general. 
But $w_m(G)$ is a {\bf multi-linear valuation} if extended to $w_m(G_1,\dots,G_m)$ with 
$G_j \subset G$ arbitrary subsets of $G$. 
Now $G_j \to w_m(G_1, \dots, G_m)$ with all other sets $G_i, i \neq j$ fixed, satisfies 
linearity $\omega(A) + \omega(B) = \omega_m(A \cup B) + \omega_m(A \cap B)$.

\paragraph{}
The 1-point complex is defined to be contractible. If $G$ is a complex and both 
$G \setminus U(x)$ and $S(x)$ are contractible, then $G$ is called {\bf contractible}. 
Two complexes $G,H$ which can morphed into each other by homotopy reductions and extensions 
are called {\bf homotopic}. This is an equivalence relation on the space of all complexes. 
Homotopy does not honor dimension and so is not topological. 
Homotopy preserves $w_1(G)$ but not $w_m(G)$ with $m>1$. The higher characteristics are topological
invariants in the sense that they are preserved under homeomorphisms, an other equivalence relation:
$H$ is called a {\bf continuous image} of $G$ if there exists a Barycentric refinement $G_m$ and a continuous
map $f:G_m \to H$ (of course using the finite topology defined above), such that $f^{-1}(S(x))$ is 
homeomorphic to $S(x)$ (inductively defined as the maximal dimension of $S(x)$ is smaller 
than the maximal dimension of $G$) and such that for every locally maximal $x \in G$ of dimension 
$d$, the complex $f^{-1}(B(x))$ is a $d$-ball. $G$ and $H$ are homeomorphic, if $G$ is a 
continuous image of $H$ and $H$ is a continuous image of $G$. 
A $d$-ball is a complex of the form $G-U(x)$, where $G$ is a $d$-sphere. A $d$-sphere
is a $d$-manifold $G$ such that $G-U(x)$ is contractible. A {\bf $d$-manifold} $G$ is a complex such that
for all $x$, the unit sphere $S(x)$ is a $(d-1)$-sphere. The empty complex is the $(-1)$-sphere. 

\section{Energy theorem}

\paragraph{}
We assume $m,k \geq 1$ are integers. Define the $m$'th order
{\bf potential energy} of the {\bf $k$-point configuration} $X \in G^k$ as
$g_m(X) = w(X) w_m(U(X))$. By design, it is zero if $U(X)=\bigcap U(x_j) = \emptyset$
which is the case if at least one of the points is out of reach of the others. 
Even for non-intersecting $x,y$ it can happen that $U(x) \cap U(y)$ is non-empty
like if $x,y$ are zero-dimensional parts of a higher dimensional simplex $z$, where
$z$ is in $U(x) \cap U(y)$. This is a case where $x,y$ can not be separated by open
sets. Our goal is to have for any $k \geq 1$ and any $m \geq 1$, the $m$'th characteristic
can be expressed using {\bf $k$-point Green function entries}
$g_m(x_1, \dots, x_k) = \prod_{j=1}^k w(x_j) w_m(\bigcap_{j=1}^k U(x_j))$. We think of this
as the {\bf $m$-th potential energy} of the $k$-point configuration $X=(x_1,\dots, x_k)$. 

\paragraph{}
Our main theorem tells that the $m$'th characteristic is the {\bf total energy} over 
all possible $k$'point configurations. 

\begin{thm}[Energy]
$w_m(G) = \sum_{X \in G^k} g_m(X)$.
\end{thm}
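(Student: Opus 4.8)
The plan is to prove the identity $w_m(G)=\sum_{X\in G^k} g_m(X)$ by induction on $k$, with the base case $k=1$ being the one-particle Gauss--Bonnet formula $w_m(G)=\sum_{x\in G} w(x)\,w_m(U(x))$ and the inductive step being driven entirely by the valuation property of $w_m$ on open sets. First I would establish the $k=1$ case directly: expanding $w_m(G)=\sum_{Y\in G^m,\ \bigcap Y\in G} w(Y)$ and noting that for a fixed simplex $z=\bigcap Y\in G$ the tuple $Y$ lies in $K(z)^m$ is too coarse; instead I would reorganize the sum $\sum_{x\in G} w(x)\,w_m(U(x))$ by writing $w_m(U(x))=\sum_{Y\in U(x)^m,\ \bigcap Y\in U(x)} w(Y)$ and swapping the order of summation, so the coefficient of a given admissible $Y$ (one with $z:=\bigcap Y\in G$) becomes $\sum_{x:\ Y\subset U(x)^m,\ z\in U(x)} w(x)=\sum_{x\subset z} w(x)=w_1(K(z))=1$, since $Y\subset U(x)^m$ and $z\in U(x)$ are both equivalent to $x\subset z$. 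This collapses the right side to $\sum_{Y} w(Y)=w_m(G)$, giving the base case.

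For the inductive step I would use the valuation formula $w_m(U\cup V)+w_m(U\cap V)=w_m(U)+w_m(V)$, valid for all open sets and all $m\ge 1$. Assuming $w_m(G)=\sum_{X\in G^{k}} w(X)\,w_m(U(X))$, I want to insert one more particle $x_{k+1}$. Fix $X=(x_1,\dots,x_k)$ and set $U=U(X)$, an open set. The plan is to show $w_m(U)=\sum_{x_{k+1}\in G} w(x_{k+1})\,w_m(U\cap U(x_{k+1}))$, which is exactly the $k=1$ identity but with the ambient complex $G$ replaced by the \emph{open set} $U$ — and here the delicate point is that the naive Gauss--Bonnet formula does \emph{not} hold for open sets in general (the paper explicitly warns of this). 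So instead I would prove the weaker statement actually needed: for an open set $U$ arising as an intersection of stars, $w_m(U)=\sum_{x\in G} w(x)\,w_m(U\cap U(x))$. Expanding the right side as $\sum_{x\in G} w(x)\sum_{Y\in (U\cap U(x))^m,\ \bigcap Y\in U\cap U(x)} w(Y)$ and swapping sums, the coefficient of an admissible $Y$ with $z=\bigcap Y$ and $Y\subset U^m$, $z\in U$ is again $\sum_{x:\ z\in U(x)} w(x)=\sum_{x\subset z} w(x)=1$ (the condition $Y\subset U(x)^m$ is automatic once $z\in U(x)$, as above, and $x$ ranges over all of $G$ unrestricted because $U(x)$ is a star in $G$, not in $U$). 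Hence the right side equals $\sum_{Y\in U^m,\ \bigcap Y\in U} w(Y)=w_m(U)$, exactly as desired. Substituting $U=U(X)$ and multiplying by $w(X)$ turns $\sum_{X\in G^k} w(X)\,w_m(U(X))$ into $\sum_{X\in G^k}\sum_{x_{k+1}\in G} w(X)w(x_{k+1})\,w_m(U(X)\cap U(x_{k+1}))=\sum_{X'\in G^{k+1}} g_m(X')$, closing the induction.

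The main obstacle I anticipate is the double-counting bookkeeping in the swap-of-summation step, specifically verifying cleanly that the inner index set $\{x\in G:\ Y\subset U(x)^m,\ \bigcap Y\in U(x)\}$ really reduces to $\{x:\ x\subset \bigcap Y\}$ — both conditions $y_i\in U(x)$ for all $i$ and $\bigcap Y\in U(x)$ unwind to $x\subset y_i$ for all $i$, i.e. $x\subset\bigcap Y=z$, so the set is the face-poset below $z$ and $\sum_{x\subset z} w(x)=\chi(\overline{\{z\}})=1$. One should double-check that $w_m$ is being summed over the \emph{same} admissible tuples on both sides (the constraint $\bigcap Y\in U$ on the left matches $\bigcap Y\in U\cap U(x)$ on the right once we have forced $x\subset z\in U$), and that when $U(X)=\emptyset$ everything is vacuously $0$ so those $X$ contribute nothing on either side. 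No convergence or finiteness issues arise since $G$ is finite; the whole argument is a finite Fubini plus the single external input of the open-set valuation formula.
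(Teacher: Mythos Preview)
Your proof is correct and rests on the same identity the paper uses, namely $\sum_{x\subset z} w(x)=\chi(K(z))=1$, but you package it differently. The paper generalizes to an arbitrary interaction $h_m:G^m\to R$ and exploits linearity of the map $h_m\mapsto g_k$: it suffices to check the case where $h_m$ is the indicator of a single tuple $Z$ with $z=\bigcap Z\in G$, and then the right-hand side factors as $\prod_{j=1}^k \sum_{x_j\subset z} w(x_j)=1^k=1$, handling all $k$ in one stroke. You instead keep $h_m=w$ and proceed by induction on $k$, repeating the same swap-of-summation at each step. Two minor remarks: the valuation lemma you announce as driving the induction is never actually invoked (your Fubini argument is self-contained), and the induction wrapper is not really needed, since the very swap you perform in the inductive step works directly for $G^k$ (fix $Y$ with $z=\bigcap Y$, and the inner sum over $X\in G^k$ factors as $\prod_j \sum_{x_j\subset z} w(x_j)=1$), which then collapses to the paper's argument. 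Your route is slightly more pedestrian but just as valid; the paper's framing buys the extra generality of arbitrary $h_m$ for free.
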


\begin{proof}
The theorem is simpler to prove if formulated more generally like if 
$$ h_m(x_1, \dots, x_m) = \prod_{j=1}^m w(x_j) $$ 
is replaced by a general function $h_m(x_1,\dots, x_m)$
of $m$ variables. The reason is that the map $h_m \to g_k$ is linear so that we only need to verify the 
statement in the simplest possible case where $h$ is $1$ only for a single
configuration $Z=(z_1, \dots, z_m)$ and $0$ else. The left hand side is then $1$. 
On the right hand side, we have to look at all 
$X=(x_1,\dots, x_k)$ for which $Z \in U(X)=U(x_1) \cap U(x_2) \cap \cdots \cap U(x_m)$. 
As we will see however below, that condition will assure that $\bigcup Z \subset U(x_j)$ 
for all $j=1,\dots ,m$. Therefore, 
$\sum_{X \in G^k} w(X) w_m(U(X)) = \prod_{j=1}^k \sum_{x_j \in G} w(x_j) w_m(U(x_j)) =1$. 
\end{proof} 

\paragraph{}
A second major point is the {\bf sphere formula} for $S(X) = \delta U(X)=B(x) \setminus U(x)$. 

\begin{thm}[Sphere formula]
$0 = \sum_{X \in G^k} w(X) w_m(S(X))$. 
\end{thm}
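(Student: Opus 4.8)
The plan is to deduce the sphere formula from the two valuation-type identities already established in the excerpt: the local valuation formula
$$ w_m(B(x)) = w_m(U(x)) - (-1)^m w_m(S(x)) $$
and the ball formula $w_m(G) = \sum_{x \in G} w(x)\, w_m(B(x))$, together with the one-particle Green function formula $w_m(G) = \sum_{x \in G} w(x)\, w_m(U(x))$. Subtracting the latter two gives $0 = \sum_{x \in G} w(x)\,[\,w_m(B(x)) - w_m(U(x))\,]$, and the local valuation formula rewrites the bracket as $-(-1)^m w_m(S(x))$; since $(-1)^m \neq 0$, this yields $\sum_{x \in G} w(x)\, w_m(S(x)) = 0$, which is the $k=1$ case. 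So the first step is to write this clean two-line derivation of the $k=1$ statement, being careful that the ball formula and the one-particle formula are stated for a \emph{simplicial complex} $G$ (the excerpt flags that the Gauss–Bonnet form fails for general open sets), and that each $B(x)$ is itself a simplicial complex so the local valuation formula applies to it.

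The second step is to promote this to general $k$. Here I would introduce the $k$-point analogue $S(X) = B(X)\setminus U(X) = \delta U(X)$ for $X \in G^k$ and prove a $k$-point local valuation formula $w_m(B(X)) = w_m(U(X)) - (-1)^m w_m(S(X))$ — but the excerpt explicitly warns this \emph{fails} for general open sets $U(X) = \bigcap_j U(x_j)$, so this route is blocked. The cleaner approach is to run the same energy-theorem machinery: by Theorem 1 (Energy) applied with the interaction replaced by the "ball" interaction, one has $\sum_{X \in G^k} w(X) w_m(B(X)) = w_m(G)$ and $\sum_{X \in G^k} w(X) w_m(U(X)) = w_m(G)$ (the energy theorem's proof only used that $Z \subset U(X)$ forces $\bigcup Z \subset U(x_j)$, and the factorization $\prod_j \sum_{x_j} w(x_j) w_m(U(x_j))$, which works identically with $B$ in place of $U$ since $B(x) = \overline{U(x)}$ and $w_1(B(x)) = w_1(U(x))$-type cancellations are already built into the $k=1$ ball formula). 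Subtracting, $\sum_{X} w(X)\,[\,w_m(B(X)) - w_m(U(X))\,] = 0$. It then remains to identify $w_m(B(X)) - w_m(U(X))$ with $\mp(-1)^m w_m(S(X))$ \emph{in aggregate}, i.e. after summing against $w(X)$, even though it may fail pointwise.

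The main obstacle, then, is precisely this last identification: the local valuation formula is only pointwise-valid for genuine stars $U(x)$, not for intersections $U(X)$, so I cannot simply substitute. The way I would handle it is to not go through $B(X)$ at all for the general-$k$ case, but instead exploit multilinearity: the excerpt notes $w_m$ extends to a multilinear valuation $w_m(G_1,\dots,G_m)$, and $B(x) = U(x) \sqcup S(x)$ as a set-level disjoint-into-closed decomposition. Writing $w_m(B(x))$ via this decomposition and the valuation property, the difference $w_m(B(x)) - w_m(U(x))$ is a sum of "mixed" terms in which at least one argument is $S(x)$; the combinatorial structure of these mixed terms is exactly what makes $(-1)^m w_m(S(x))$ appear with sign $-(-1)^m = (-1)^{m+1}$. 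Pushing this through the $k$-point Green function sum and invoking Theorem 1 once more (now with the interaction $h_m$ supported on configurations meeting a fixed sphere) should collapse everything to $\sum_{X \in G^k} w(X) w_m(S(X)) = 0$. I expect the bookkeeping of signs in the mixed multilinear terms — ensuring the $(-1)^m$ from the local valuation formula survives the passage to $k$ points — to be the only genuinely delicate point; the rest is a repackaging of Theorems 1 and the already-cited $m=1$ sphere formula from \cite{KnillEnergy2020,Sphereformula}.
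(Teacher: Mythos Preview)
Your $k=1$ derivation is exactly the paper's argument, and for general $k$ you have correctly identified all three ingredients the paper uses: the energy formula $w_m(G)=\sum_{X\in G^k} w(X)\,w_m(U(X))$, its ball variant $w_m(G)=\sum_{X\in G^k} w(X)\,w_m(B(X))$, and the local valuation identity. The paper's proof is nothing more than subtracting the first two and applying the third pointwise.

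The gap in your plan is your reading of the caveat about the local valuation formula. You write that the route via $w_m(B(X)) = w_m(U(X)) - (-1)^m w_m(S(X))$ is ``blocked'' because the excerpt warns it fails for general open sets. But $U(X)=\bigcap_{j} U(x_j)$ is \emph{not} a general open set: a simplex $y$ lies in $U(X)$ iff $x_j\subset y$ for every $j$, i.e.\ iff $\bigcup_j x_j \subset y$. Since $G$ is a simplicial complex, either $z:=\bigcup_j x_j\in G$ and $U(X)=U(z)$ is a single star, or no such $y$ exists and $U(X)=\emptyset$. In either case the local valuation formula holds pointwise for $U(X)$; this is precisely what the paper records after Lemma~3 (``This works more generally for the open sets $U(X)=\bigcap U(x_j)$\ldots''). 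The warning you quote concerns open sets that are genuine unions of several stars, not intersections.

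Consequently your ``main obstacle'' is not an obstacle, and the multilinear workaround you sketch in the final paragraph is unnecessary. Once you observe that $U(X)$ is a star (or empty), the identity $w_m(B(X))-w_m(U(X)) = -(-1)^m w_m(S(X))$ holds for every $X\in G^k$, and subtracting the two energy formulas gives $0 = -(-1)^m \sum_{X\in G^k} w(X)\, w_m(S(X))$ directly. That is the paper's entire proof.
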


This means that the total energy of all unit spheres of even configurations
is the same than the total energy of all unit spheres of odd configurations. 

\begin{proof}
The energy theorem also works of $U(X)$ is replaced by $B(X)$, which is the 
closure of $U(X)$.  The two equations 
$$ 0 = \sum_{X \in G^k} w(X) w_m(U(X)) $$
$$ 0 = \sum_{X \in G^k} w(X) w_m(B(X)) $$
and the local valuation formula
$$   w_m(U(X)) - (-1)^m w_m(S(X)) = w_m(B(X)) \;  $$
prove the theorem. 
\end{proof}

\paragraph{}
Let us add already a remark which however leads to an other story to which we hope to be able to
write more about in the future: 
the theorem also works for the {\bf dual spheres} $\hat{X}=S(x_1) \cap \cdots \cap S(x_k)$ which is 
different from $S(X) = \delta (U(x_1) \cap \cdots \cap U(x_k))$. 
$$ 0 = \sum_{X \in G^k} w(X) w_m(\hat{X})  \; . $$

\paragraph{}
The dual spheres $\hat{X}$ play an important role in other places like in {\bf graph coloring}. 
In the case when $G$ is a $d$-manifold and if all points in $X$ are adjacent in
the  metric in which two points $x,y \in S(x)$ have distance $1$, then $\hat{X}$ is always 
a $(d-k)$-sphere. If $k=1$ then $\hat{X} = S(x)$, which is in the manifold case a $(d-1)$-sphere.
For $k=2$, and  $x,y \in S(x)$ we have $\hat{(x,y)} = S(x) \cap S(y)$ is a $(d-2)$-sphere. 
We have looked at this earlier in the context of graphs, where $X=(v_1,\dots,v_k)$ are the vertices
of a complete graph $K_k$ and so is associated to a vertex $x$ in the Barycentric refined graph. 
If $G$ is a $d$-manifold, then by definition $\hat{X}=\hat{x}$ is a $(d-k)$-sphere. We have seen
this as a {\bf duality} because $\bigcap_{v \in x} S(v)=\hat{x}$ and 
$\bigcap_{v \in \hat{x}} S(v) = x$. This can be seen as a duality between $(k-1)$-spheres 
(the boundary sphere of a simplex) and $(d-k)$-spheres. 
Shifting $k$, there is a duality between $k$-spheres and $(d+1-k)$-spheres.  

\paragraph{}
Still dwelling on that, moving cohomology from simplices to spheres could allow then to see {\bf Poincar\'e duality}
more elegantly, avoiding concepts like CW complexes which are necessary already in elementary
setups like that the cube is the dual of the octahedron. When the cube is seen as a 2-sphere, it 
by definition is not a simplicial complex but only a more general CW complex if one wants 
to understand it as dual to the octahedron. So,
if one wants to avoid the continuum (as we do), one usually goes to CW complexes. Poincar\'e himself
already struggled quite a bit with the difficulty that the dual of a simplicial complex is not a simplicial
complex. We will see the duality in the context of $\delta$-sets which generalize simplicial complexes
too but more naturally than CW complexes. $\delta$ sets are more general than simplicial sets, a construct
which has a bit more structure than $\delta$ sets. But simplicial sets as a subclass of $\delta$ sets 
less accessible. Entire articles have been written just to explain the definition. 

\paragraph{}
Here is an other remark, maybe more for the mathematical physics minded: 
motivated by the {\bf Fock picture} in which $G^k$ are considered as 
$k$-particle configurations, we could sum up the theorem over $k$ and get for example
$$ w_m(G) = \sum_{k=1}^{\infty} \frac{1}{2^k} \sum_{X \in G^k} g_m(X) \; . $$
If we would replace $w(x)$ with $h(x)=(-1)^{{\rm dim}(x)}/2$ and set $h(X)=\prod_j h(x_j)$ and
still use $U(X)=\bigcap_j U(x_j)$, if $X=(x_1,\dots,x_k)$
and setting $g_m(X) = h(X) w_m(U(X))$, this would allow to write
$w_m(G) = \sum_{X} g_m(X)$ where $X$ runs over all particle configurations ranging 
from single particles $k=1$ to pair interactions $k=2$, 
triple interactions $k=3$ etc. The total {\bf energy of space} is then the 
sum over all interaction energies overall  possible particle configurations $X$. 
If we think of $g_m(X)$ as the {\bf potential energy} of the particle configuration $X$, 
then $w_m(G)$ is the sum over all possible potential energies which particle configurations 
which can be realized in $G$. 

\paragraph{}
Here are special cases, written out in more detail. First of all, lets write down the definitions
of {\bf Euler characteristic}
$$   w_1(G) = \sum_{x \in G} w(x) $$ 
and {\bf  Wu characteristic} 
$$ w_2(G) = \sum_{x,y \in G^2, x \cap y \neq \emptyset} w(x) w(y) \; . $$

i) For $m=1$, we have expressions for the Euler characteristic: \\
$w_1(G) = \sum_{x} w(x) w_1(U(x)) = \sum_x g_1(x)$ \\
$w_1(G) = \sum_{x,y} w(x) w(y) w_1(U(x) \cap U(y)) = \sum_{x,y} g_1(x,y)$ \\
$w_1(G) = \sum_{x,y,z} w(x) w(y) w(z) w_1(U(x) \cap U(y) \cap U(z)) = \sum_{x,y,z} g_1(x,y,z)$ \\
$w_1(G) = \sum_{x,y,z,w} g_1(x,y,z,w)$ \\
The first two expressions for $k=1$ and $k=2$ have appeared in \cite{KnillEnergy2020}.
The inverse of the matrix $g_1(x,y)$ is $L(x,y) = 1$ if $x =y$ and $L(x,y)=0$ if $x \neq y$.   \\
ii) Next come expressions for the Wu characteristic: \\
$w_2(G) = \sum_{x} w(x) w_2(U(x)) = \sum_x g_2(x)$ \\
$w_2(G) = \sum_{x,y \in G} w(x) w(y) w_2(U(x) \cap U(y)) = \sum_{x,y} g_2(x,y)$ \\
$w_2(G) = \sum_{x,y,z \in G} w(x) w(y) w(z) w_2(U(x) \cap U(y) \cap U(z)) = \sum_{x,y} g_2(x,y,z)$ \\
$w_2(G) = \sum_{x,y,z,w} g_2(x,y,z,w)$ \\
iii) And here are expressions for the third characteristic \\
$w_3(G) = \sum_{x} w(x) w_3(U(x)) = \sum_x g_3(x)$ \\
$w_3(G) = \sum_{x,y} w(x) w(y) w_3(U(x) \cap U(y)) = \sum_{x,y} g_3(x,y)$ \\
$w_3(G) = \sum_{x,y,z} w(x) w(y) w(z) w_3(U(x) \cap U(y)) \cap U(z)) = \sum_{x,y,z} g_3(x,y,z)$ \\
$w_3(G) = \sum_{x,y,z,w} g_3(x,y,z,w)$ \\ 

{\bf Remarks}:
{\bf 1)} The theorem works if $h(X)=w(X)$ is replaced by any $R$-valued interaction 
function $h: G^k \to R$. We would for example set $w_2(A) = \sum_{x,y, x \cap y \in A} h(x,y)$ and
get this equal to $\sum_{x,y \in G^2} g_2(x,y)$ with 
$$  g_2(x,y) = w(x) w(y) w_2(U(x) \cap U(y)) \;  $$
or equal to $\sum_{x \in G} g_1(x)$ with $g_1(x) = w(x) w_2(U(x))$. \\
{\bf 2)} The energy theorems in the case of $k=1$ are of Gauss-Bonnet type because we attach
a fixed curvature $g_m(x)$ to a point. We can actually interpret this also as a Poincar\'e-Hopf
theorem. There is some duality here as $U(x)$ can be seen dual to $K(x)$. \\
{\bf 3)} The energy formula reduces the time for the computation of the characteristic 
substantially. Especially for $k=1$, where we have only to compute the 
$m$'th characteristic of the $n=|G|$ stars $U(x)$. \\
{\bf 4)} Instead of self-interactions of all $G^j$, we could take open sets 
$G_j \subset G$ and get expressions 
$$w_m(G_1,\dots, G_k) = \sum_{X, x_j \in G_j} g_m(X) \; . $$
For example, $w_m(A,B,C) = \sum_{x \in A, y \in B, z \in C} g_m(x,y,z)$,
where 
$$  g_m(x,y,z) = w(x) w(y) w(z) w_m(U(x) \cap U(y) \cap U(z)) \; . $$
Think of this as the {\bf total m'th characteristic energy} of the three sets $A,B,C$.  \\
{\bf 5)} We have seen that in order to prove the theorems, it is helpful 
to energize more generally $h(x_1,\dots,x_m)$ and set 
$w_m(G)=\sum_{X \in G^m, X>0} h(X)$. The Green function procedure which maps
functions $h: G^k \to R$ to functions $g: G^m \to R$ with $g_m(X) = w(X) w_m(U(X))$
is linear. It is therefore enough to verify the claim for basis elements,
where h(x) is zero except at $h(x_1,\dots,x_k)=1$.

\section{Valuation  Lemma}

\paragraph{}
The following theorem is a main reason, why higher characteristics 
are topological invariants. The following valuation formula does not work for
closed sets in general already if $m \geq 2$. 

\begin{lemma}[Valuation] 
$w_m(U \cup V) = w_m(U) + w_m(V) - w_m(U \cap V)$ for all open $U,V$ and all $m \geq 1$. 
\end{lemma}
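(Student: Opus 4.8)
The plan is to prove the valuation formula by reducing it to the definition of $w_m$ as a sum over $m$-tuples of simplices with a common intersection, and then carefully bookkeeping which tuples are counted for each of the four open sets $U \cup V$, $U \cap V$, $U$, and $V$. Recall that for any set $A \subset G$ we have $w_m(A) = \sum_{X \in A^m, \bigcap X \in A} w(X)$. The key structural observation I would establish first is the following: if $X = (x_1, \dots, x_m)$ is an $m$-tuple with all $x_j$ lying in an open set $W$ and with $\bigcap X = z$ a simplex, then in fact $z \in W$ automatically. This is because $W$ open means $W$ is a union of stars, so each $x_j \in W$ lies in some star $U(a_j) \subset W$, hence $a_j \subset x_j$; but then $a_j \subset z$ would not follow directly — so instead I would use that $z \subset x_j$ for each $j$, and openness of $W$ combined with $x_j \in W$: since $W = \bigcup_{w \in W} U(w')$ is downward-saturated under "is a face of" in the sense dual to closure... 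Actually the clean statement is: $W$ open $\iff$ $W$ is closed under taking supersets (cofaces) within $G$. Wait — no: $U(x) = \{y : x \subset y\}$, so a union of stars is closed under taking cofaces, and $z \subset x_j$ means $z$ is a \emph{face}, not a coface, so $z$ need not be in $W$.

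So the correct key lemma is instead the dual one, and I would state it as: for open $W$ and a tuple $X \in W^m$, the condition "$\bigcap X \in W$" can fail, which is precisely why $w_m$ on open sets is subtler. The right move is to rewrite $w_m(W) = \sum_{z \in W} \left( \sum_{X \in W^m, \bigcap X = z} w(X) \right)$, splitting by the value of the intersection $z$, where $z$ ranges over $W$. For fixed $z \in G$, the inner sum over tuples $X$ with $x_j \supset z$ and each $x_j \in W$ is a product $\prod_{j=1}^m \left( \sum_{x \in W, x \supset z} w(x) \right) = \bigl( w_1(U(z) \cap W) \bigr)^m$ up to the constraint that the intersection is \emph{exactly} $z$ rather than a proper coface — but since $\bigcap X \supseteq z$ always and we are summing over all $X$ with each $x_j \supseteq z$, inclusion–exclusion over the poset of cofaces of $z$ handles the "exactly" versus "at least". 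The cleanest route avoids even this: I would write $w_m(W) = \sum_{z \in G, z \in W} f_m(z, W)$ where $f_m(z,W)$ is the signed count of $m$-tuples in $W$ meeting exactly at $z$, and observe that $z \in W$ is the only place openness enters.

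With that reformulation, the valuation identity $w_m(U \cup V) + w_m(U \cap V) = w_m(U) + w_m(V)$ splits over $z$. For a fixed simplex $z$: if $z \notin U \cup V$ then $z$ contributes to none of the four terms and the identity is $0 = 0$. If $z \in U \cap V$, then $z$ contributes to all four, and I must show $f_m(z, U \cup V) + f_m(z, U \cap V) = f_m(z, U) + f_m(z, V)$; here the relevant sets of cofaces are $C_{U \cup V} = C_U \cup C_V$ and $C_{U \cap V} = C_U \cap C_V$ where $C_W = \{x \in W : x \supseteq z\}$, and $f_m(z, W)$ depends on $W$ only through $C_W$ via a fixed function $\phi_m$ (the signed count of $m$-tuples from $C_W$ with intersection exactly $z$). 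So the claim becomes $\phi_m(A \cup B) + \phi_m(A \cap B) = \phi_m(A) + \phi_m(B)$ for the cofiber posets $A = C_U, B = C_V$ — i.e. $\phi_m$ is a valuation on subsets of the coface poset of $z$. If $z \in U \setminus V$ (or symmetrically), then $z$ contributes to $w_m(U \cup V)$ and $w_m(U)$ only, $C_{U \cup V} \cap (\text{cofaces of } z) = C_U$ since no coface of $z$ lies in $V$ (as $V$ is open and would then contain $z$ — \emph{this} is where openness of $V$ is essential), so both contributions equal $\phi_m(C_U)$ and cancel correctly while the other two terms vanish.

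So the proof reduces to two facts: (1) the decomposition of $w_m(W)$ over intersection-vertices $z \in W$ with contribution depending only on the coface set $C_W(z)$, and (2) the single combinatorial lemma that $\phi_m$ — the signed count $\sum w(X)$ over $m$-tuples $X$ from a finite family $\mathcal{F}$ of sets all containing $z$, with $\bigcap X = z$ exactly — is a valuation in $\mathcal{F}$: $\phi_m(\mathcal{F}_1 \cup \mathcal{F}_2) + \phi_m(\mathcal{F}_1 \cap \mathcal{F}_2) = \phi_m(\mathcal{F}_1) + \phi_m(\mathcal{F}_2)$. Step (2) is the main obstacle, and I would prove it by Möbius inversion on the poset of cofaces of $z$: writing $\psi_m(\mathcal{F}) = \bigl(\sum_{x \in \mathcal{F}} w(x)\bigr)^m = \sum_{z \subseteq t} (\text{number of } X \text{ with } \bigcap X = t) $ ... more precisely $\psi_m$ is the sum of $\phi_m$ over all "intersection targets $t \supseteq z$", so $\phi_m$ is obtained from the manifestly-valuative $\psi_m(\mathcal F) = (\text{power of a valuation})$ by a fixed linear (Möbius) operation, and linear combinations of valuations are valuations — except $\psi_m$ being a power of a valuation is \emph{not} itself a valuation, which is the genuine subtlety. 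The resolution is that $\phi_m(\mathcal F)$ is actually multilinear: separating the $m$ slots, $\phi_m(\mathcal F) = \sum$ over ways to partition $\{1,\dots,m\}$ and assign, and the "exact intersection $= z$" constraint is what makes it collapse to a genuine valuation in $\mathcal F$; I expect the author proves this by the inclusion–exclusion / multilinear-valuation remark already flagged in the Notations section ("$w_m$ is a multi-linear valuation if extended to $w_m(G_1,\dots,G_m)$"), so the honest hard step is verifying that multilinearity in the $m$ arguments, restricted to the diagonal but with the open-set intersection condition, descends to additivity under $\cup,\cap$ of the ambient open set.
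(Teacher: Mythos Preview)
You are circling the right idea but leave a genuine gap in the case $z \in U \cap V$, and your proposed workaround there is actually false. The key observation---which you correctly invoke in the $z \in U \setminus V$ case but then forget to reuse---is that open sets are upward closed: if $z \in W$ and $W$ is open, then every coface $x \supseteq z$ lies in $W$ as well. Applied when $z \in U \cap V$, this gives $C_U = C_V = C_{U\cup V} = C_{U\cap V} = U(z)$, the full star of $z$ in $G$; all four terms $f_m(z,\cdot)$ are then identical and the identity is trivial. No ``$\phi_m$ is a valuation'' argument is needed.

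In fact that argument cannot work as stated: $\phi_m$ is \emph{not} a valuation on arbitrary families of cofaces. For $m=2$, $z=\{1\}$, $A=\{\{1\},\{1,2\}\}$, $B=\{\{1\},\{1,3\}\}$ one computes $\phi_2(A)=\phi_2(B)=-1$, $\phi_2(A\cap B)=1$, $\phi_2(A\cup B)=-1$, so $\phi_2(A\cup B)+\phi_2(A\cap B)=0\neq -2=\phi_2(A)+\phi_2(B)$. What rescues the situation in the lemma is precisely that $C_U$ and $C_V$ are never arbitrary: once $z$ belongs to an open set, the coface set is automatically the whole star.

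The paper's proof is exactly this observation, packaged as a short ``Patching'' lemma: for $W$ open, the condition ``$X\in W^m$ and $\bigcap X\in W$'' collapses to the single condition ``$\bigcap X\in W$'', because each $x_j\supseteq \bigcap X$ is forced into $W$. Hence $w_m(W)=\sum_{X\in G^m:\ \bigcap X\in W} w(X)$ is a sum indexed only by the location of the single simplex $\bigcap X$, and ordinary inclusion--exclusion on that simplex gives the valuation formula in one line. Your decomposition by $z=\bigcap X$ is compatible with this, but you should stop immediately after noting that the contribution at $z$ depends only on whether $z\in W$, not on anything finer about $W$.
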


\paragraph{}
The key why this works is: 

\begin{lemma}[Patching] 
Given $X \in G^k$ and $U,V$ are open, define $z=\bigcap X$.
If $z \in U \cap V$, then both $x,y$ are simultaneously in $U,V$ and so 
$x \in U \cap V$ and $y \in U \cap U$. 
\end{lemma}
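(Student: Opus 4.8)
The plan is to reduce the statement to the single defining feature of the Alexandroff topology on $G$: a subset $W \subset G$ is open if and only if it is upward closed for the face relation, i.e.\ whenever $w \in W$ and $w \subset w'$ with $w' \in G$, then $w' \in W$. This is immediate because the star $U(w) = \{w' \in G : w \subset w'\}$ is the minimal open neighbourhood of $w$, so $W$ is open exactly when $U(w) \subset W$ for every $w \in W$. I would record this characterization first.

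Next I fix $X = (x_1,\dots,x_k) \in G^k$ and put $z = \bigcap X = \bigcap_{j=1}^k x_j$. If $z \notin G$ there is nothing to prove, so assume $z \in G$; the only elementary observation needed is the set inclusion $z \subset x_j$ for every $j$. Now if $z \in U$ with $U$ open, upward closedness applied to $z \subset x_j$ gives $x_j \in U$ for all $j$; applying this to both $U$ and $V$ shows that $z \in U \cap V$ forces $x_1,\dots,x_k \in U \cap V$, which in the two-point shorthand $X = (x,y)$, $z = x \cap y$ is exactly the assertion $x, y \in U \cap V$.

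Finally I would spell out how this drives the Valuation Lemma, since isolating it is the whole point. Writing $S_W = \{X \in G^m : x_j \in W \text{ for all } j \text{ and } \bigcap X \in W\}$ for an open set $W$, one has $S_{U\cap V} = S_U \cap S_V$ trivially and $S_U \cup S_V \subset S_{U \cup V}$ trivially; the Patching Lemma supplies the reverse inclusion $S_{U\cup V} \subset S_U \cup S_V$ by splitting a configuration $X$ according to whether $\bigcap X$ lies in $U$ or in $V$ and then pulling its components along. Ordinary inclusion--exclusion for the finite sets $S_W$ then gives $w_m(U \cup V) + w_m(U \cap V) = w_m(U) + w_m(V)$ term by term, for any coefficient function $h$. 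I expect no real obstacle; the only point to watch is that upward closedness must be invoked on the components $x_j$, not merely on $z$, so that $X$ actually belongs to $S_U$ (or $S_V$) and not just to $S_{U \cup V}$.
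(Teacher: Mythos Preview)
Your proposal is correct and follows essentially the same route as the paper's proof: both hinge on the fact that open sets in the star topology are upward closed, so $z=\bigcap X \in U\cap V$ together with $z\subset x_j$ forces each $x_j$ into $U\cap V$. The paper phrases this as ``every simplex $w$ which contains $z$ is both in $U$ and $V$,'' which is exactly your upward-closedness step, and your added paragraph deriving the Valuation Lemma via inclusion--exclusion on the configuration sets $S_W$ matches the paper's subsequent (terser) counting argument.
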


\begin{proof}[Patching]
If $z=\bigcap X$ is in $U \cap V$, then every simplex $w$ which contains $z$
is both in $U$ and $V$ and so in $U \cap V$ so that $w$ is in $U$ as 
well as in $V$ and $U \cap V$. Because this works for any $w$, it works for 
any of the points $x_j$. Since all $x_j $are in $U,V,U \cap V$ also 
the union $z = \bigcup X$ is in all three sets. 
\end{proof} 

\begin{proof}[Valuation]
Counting $X$ in $U \cup V$ is the sum of the counts in $U$ and the counts in $V$
with a double count if $X$ is in the intersection. 
\end{proof} 

{\bf Remark:}  \\
{\bf 1)} For $m>1$, this in general does not hold for closed sets nor for 
a mixture of closed and open sets if $m>1$. We will see for general $m$ that
for an open set $A=U(x)$, a closed disjoint set $B=S(x)$ and closed union $B(x)$
and empty intersection we have $\omega_m(U(x)) - \omega_m(S(x))= \omega_m(B(x))$ but
for odd $m$ that  $\omega_m(U(x)) + \omega_m(S(x))= \omega_m(B(x))$. \\
{\bf a)} For $G=\{ (1),(2),(3),(12),(23)\}$, the two closed sets  \\
$A=\{(1),(2),(12)\}, B=\{(2),(3),(23)\}$ intersect in $C=\{ (2) \}$. 
Now $w_2(A)=w_2(B)=-1$ now $\omega_2(A) + \omega_2(B)=w_2(G)-\omega_2(S)$. \\
{\bf b)} If $G$ is the octahedron complex which is the join 
$C_4 \oplus \{a,b\}$ 
and $A=G \setminus U(a), B=G \setminus U(b)$ which are both balls 
intersecting in
the circle $S=\{x_1,x_2,x_3,x_4,(x_1 x_2),(x_2 x_3),(x_3 x_4),(x_4 x_1) \}$ 
which is also closed. The pair $(a x_1) \subset A$ intersects with the pair 
$(b x_1) \subset B$ but these two pairs are both not in $S$. Still 
$w_1(A) + w_2(B)=w_2(G) + w_2(S)$. \\
Example a) shows that looking for more general relations is rather 
pointless as it depends on $m$ and the dimension.  \\
{\bf 2)} In the special case $m=1$, where we deal with Euler characteristic, 
the valuation formula holds for all sets
because there is no interaction between points. This is the reason
that for $m=1$, we have a homotopy invariant while for $m>1$ we have 
topological invariants.

\paragraph{}
The local valuation formulas link two closed and an open set:

\begin{lemma}[Local valuation]
$$ w_m(B(x)) = w_m(U(x)) - (-1)^m w_m(S(x)) \;  $$
\end{lemma}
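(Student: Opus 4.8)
The plan is to reduce the identity to the local cone structure of $B(x)$ around $x$, and from there to a single elementary sign identity about subsets of the finite set $x$. Write $n=|x|\geq 1$ and let ${\rm lk}(x)=\{b\in G : b\cap x=\emptyset,\ b\cup x\in G\}$ be the ordinary (deletion) link. The first step is the structural fact $z\in B(x)\iff z\cup x\in G$, which holds since $B(x)=\overline{U(x)}$ and in the Alexandroff topology $z\in\overline{A}$ iff $U(z)$ meets $A$. Consequently the map $z\mapsto(a,b):=(z\cap x,\ z\setminus x)$ is a bijection of $B(x)$ onto the pairs $(a,b)$ with $a\subseteq x$, $b\in{\rm lk}(x)\cup\{\emptyset\}$, $(a,b)\neq(\emptyset,\emptyset)$, and under it $z\in U(x)\iff a=x$ while $z\in S(x)\iff a\subsetneq x$. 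For an $m$-tuple $(z_1,\dots,z_m)$ one then has $\bigcap_i z_i\leftrightarrow(\bigcap_i a_i,\ \bigcap_i b_i)$ and $w(z_i)=-(-1)^{|a_i|}(-1)^{|b_i|}$. (Equivalently $B(x)=K(x)*{\rm lk}(x)$ and $S(x)=\partial K(x)*{\rm lk}(x)$ as joins, with $U(x)$ the part of $B(x)$ not in $S(x)$; but since $U(x)$ is not a complex and no join formula for $w_m$ is available here, I would run the count by hand.)

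Second, I would normalize the three membership conditions. In each of $w_m(B(x))$, $w_m(U(x))$, $w_m(S(x))$ the requirement that $\bigcap_i z_i$ lie in the ambient set reduces to $\bigcap_i z_i\neq\emptyset$, i.e.\ to ``$\bigcap_i a_i\neq\emptyset$ or $\bigcap_i b_i\neq\emptyset$'': for $U(x)$ it is automatic, and for $S(x)$ one uses that $\bigcap_i a_i\subseteq a_1\subsetneq x$, so $\bigcap_i a_i$ can never equal $x$ and $\bigcap_i z_i$ can never re-enter $U(x)$. Under this condition the per-coordinate nonemptiness $z_i\neq\emptyset$ is automatic, so expanding each characteristic as a double sum over $(a_i)$-tuples and $(b_i)$-tuples and using inclusion--exclusion $[P\vee Q]=[P]+[Q]-[P\wedge Q]$ with $P:\bigcap_i a_i\neq\emptyset$, $Q:\bigcap_i b_i\neq\emptyset$, the sums factor into an ``$a$-part'' (a signed count of $m$-tuples of subsets of $x$) times a ``$b$-part'' (a signed count of $m$-tuples from ${\rm lk}(x)$). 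The key quantitative input is that $\sum_{a\subseteq x}(-1)^{|a|}=0$ while $\sum_{a\subsetneq x}(-1)^{|a|}=(-1)^{n+1}\neq 0$: the first makes the $[Q]$-term of the $B(x)$-expansion vanish (there the $a_i$ are unconstrained), the second keeps the analogous term of the $S(x)$-expansion alive (there the $a_i$ run over proper subsets only), and this asymmetry is exactly what produces the sign $-(-1)^m$.

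Carrying this out, and splitting the $b$-part as $B_m=B_m^0+B_m^+$ according to whether $\bigcap_i b_i$ is empty, one gets $w_m(U(x))=\bar A_m B_m$, $w_m(B(x))=(-1)^m A_m B_m^0$, $w_m(S(x))=(-1)^m(\tilde A_m B_m^0+\bar A_m B_m^+)$, where $A_m$, $\bar A_m$, $\tilde A_m$ are the signed counts of $m$-tuples of, respectively, subsets of $x$ with nonempty common part, all proper subsets of $x$, and proper subsets with nonempty common part. Since $B_m^0$ and $B_m^+$ are arbitrary (they involve the unconstrained complex ${\rm lk}(x)$) but enter linearly, the assertion $w_m(B(x))-w_m(U(x))+(-1)^m w_m(S(x))=0$ collapses to the scalar identity $(-1)^m A_m=\bar A_m-\tilde A_m$, and a one-line inclusion--exclusion over the vertices of $x$ evaluates both sides to $(-1)^{(m+1)(n+1)}$, finishing the proof. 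I expect the only real obstacle to be bookkeeping: getting the three membership reductions exactly right (the $S(x)$ case hinging on $\bigcap_i a_i\neq x$ being automatic) and tracking all the signs; once the $(a,b)$-decomposition is in place the identity is forced.
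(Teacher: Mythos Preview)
Your argument is correct, and in fact you are supplying considerably more than the paper does: the paper \emph{states} this lemma but gives no proof of it (it only remarks afterward that the formula extends to $U(X)$, $B(X)$, $S(X)$, and in the manifold section verifies it by plugging in the explicit values of $w_m(U(x))$, $w_m(S(x))$, $w_m(B(x))$ computed there). So there is no ``paper's own proof'' to compare against; your proposal fills the gap.

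On the argument itself: the structural steps are all sound. The identification $z\in B(x)\Leftrightarrow z\cup x\in G$ and the resulting bijection $z\mapsto(z\cap x,\,z\setminus x)$ onto pairs $(a,b)$ with $a\subseteq x$, $b\in\mathrm{lk}(x)\cup\{\emptyset\}$ is exactly the join description $B(x)=K(x)*\mathrm{lk}(x)$, $S(x)=\partial K(x)*\mathrm{lk}(x)$; and since the $a$-coordinates live inside $x$ while the $b$-coordinates are disjoint from $x$, one indeed gets $\bigcap_i z_i=(\bigcap_i a_i)\cup(\bigcap_i b_i)$, so the membership conditions reduce precisely as you say. The inclusion--exclusion $[P\vee Q]=[P]+[Q]-[P\wedge Q]$ then factors the three characteristics, the $[Q]$-term for $B(x)$ dies because $\sum_{a\subseteq x}(-1)^{|a|}=0$, and after collecting terms the $B_m^+$-contributions cancel identically, leaving the single scalar identity $(-1)^m A_m=\bar A_m-\tilde A_m$. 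Your evaluation of both sides as $(-1)^{(m+1)(n+1)}$ checks out.

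Two small points of presentation. First, the phrase ``since $B_m^0$ and $B_m^+$ are arbitrary'' is logically unnecessary: the $B_m^+$-terms cancel on the nose, and then the scalar identity in front of $B_m^0$ is what you actually verify; the argument does not need $\mathrm{lk}(x)$ to vary. Second, writing $w_m(U(x))=\bar A_m B_m$ is correct numerically (both the $a$-factor and $\bar A_m$ equal $(-1)^{m(n+1)}$), but since $\bar A_m$ is defined as a sum over \emph{proper} subsets while the $a$-part of $U(x)$ comes from $a_i\equiv x$, it may be clearer to record the $a$-factor directly as $(-1)^{m}(-1)^{mn}$ and only afterwards note the coincidence with $\bar A_m$.
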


This works more generally for the open sets $U(X)=\bigcap U(x_j)$ and 
$B(X)=\overline{U(X)}$ and $S(X) = B(X) \setminus U(X)$, the boundary. 
The reason is that $U(X)$ is a disjoint union of disjoint open $U(x)$. 

\section{The case of manifolds}

\paragraph{}
For $x \in G$, the {\bf unit ball} $B(x) = \overline{U(x)}$ and the 
{\bf unit sphere} $S(x)=B(x) \setminus U(x)$ are both closed sets and 
so simplicial complexes. A complex $G$ is called a {\bf $d$-manifold} if every $S(x)$ is 
a $(d-1)$-sphere. A $d$-manifold is a {\bf $d$-sphere}, if there exists 
$x$ such that $G \setminus U(x)$ is contractible. 
In the case when $G$ is a $d$-sphere, $G \setminus U(x))$ is declared to be a 
{\bf $d$-ball}. A complex $G$ is called {\bf contractible} if there exists $x \in G$ 
such that both $S(x)$ and $G \setminus U(x)$ are contractible. These definitions are recursive with 
respect to the dimension $d$. The foundation is laid by assuming the {\bf void} 
$0=\emptyset$ is the $(-1)$-sphere and that the one-point complex called 
{\bf unit} $1=\{1\}$ is the smallest contractible complex. 

\paragraph{}
A {\bf $d$-manifold with boundary} is a complex $G$ such that every 
$S(x)$ is either a $(d-1)$-sphere or a $(d-1)$ ball. The {\bf boundary} $\delta G = 
\{ x \in G, S(x) \; {\rm ball} \}$ is a closed subset of $G$. 
(Just check that if $x \in \delta G$ and $z \subset x$ then $S(z)$ is a ball).
It is a $(d-1)$ manifold because for $z \in \delta G$ the set $S(z) \cap \delta G$
is a sphere because $B(z) \cap \delta G$ is a $(d-1)$ unit ball. We have to distinguish
here between ``closed= no boundary" and ``closed=topologically closed".  
We also want to deal with the case of open manifolds without boundary like an open disk or
topologically closed manifolds with boundary which are not closed as manifolds. 
Open manifolds model also infinite manifolds like in the continuum, where for example the 
contangent correspondence allows to see the space $(-1,1)$ to be naturally 
homeomorphic to $\mathbb{R}$. Let us use the notation $U$ for an open manifold without boundary, 
$G$ for the closure and $S$ the boundary. So, $S=\delta G$ is the topological boundary.  \\

\paragraph{}
We can verify $w_m(B(x)) = w_m(U(x)) -(-1)^m  w_m(S(x))$ in the case of manifolds
without boundary. 

\begin{lemma}[Local data in manifold case] 
$w_m(U(x))=(-1)^{d m}$, $w_m(S(x)) = 1+(-1)^{d-1}$ and $w_m(B(x))=(-1)^{d(m+1)}$.
\end{lemma}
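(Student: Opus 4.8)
The plan is to establish the three values in the order listed, deriving the value on the unit sphere and then on the unit ball from the value on the star. Throughout, $G$ is a closed $d$-manifold without boundary and $x\in G$ is fixed. The only case that requires an idea is the sphere, and the device there is to \emph{expand} $w_m(S(x))$ with the one-particle ($k=1$) energy formula rather than to evaluate it as a whole.

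First I would handle $w_m(U(x))$. The point is that the star $U(x)$ is closed under intersection of its elements: if $x\subseteq z_1$ and $x\subseteq z_2$ then $x\subseteq z_1\cap z_2$, and $z_1\cap z_2$ is a non-empty subset of $z_1\in G$, hence lies in $G$ and therefore in $U(x)$. More generally $\bigcap X\in U(x)$ for every $X\in U(x)^m$, so the defining constraint of $w_m$ is vacuous on $U(x)$ and the sum factors:
$$ w_m(U(x)) = \sum_{X\in U(x)^m}\prod_{j=1}^m w(x_j) = \Big(\sum_{z\in U(x)} w(z)\Big)^{m} = \chi(U(x))^m . $$
It remains to see $\chi(U(x))=(-1)^d$. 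Since $B(x)=\overline{U(x)}$ is a $d$-ball it is contractible, so $\chi(B(x))=1$; since $S(x)=B(x)\setminus U(x)$ is a $(d-1)$-sphere, $\chi(S(x))=1+(-1)^{d-1}$; and because $\chi$ is additive over the disjoint decomposition $B(x)=U(x)\sqcup S(x)$ we get $\chi(U(x))=\chi(B(x))-\chi(S(x))=-(-1)^{d-1}=(-1)^d$. Hence $w_m(U(x))=(-1)^{dm}$.

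Next I would compute $w_m(S(x))$. The complex $S(x)$ is a $(d-1)$-sphere, in particular a closed $(d-1)$-manifold, so the one-particle energy formula applies inside it:
$$ w_m(S(x)) = \sum_{z\in S(x)} w(z)\, w_m\big(U_{S(x)}(z)\big) , $$
where $U_{S(x)}(z)$ is the star of $z$ taken in $S(x)$. By the previous paragraph, applied now to the $(d-1)$-manifold $S(x)$, each such star satisfies $w_m(U_{S(x)}(z))=(-1)^{(d-1)m}$, so $w_m(S(x))=(-1)^{(d-1)m}\chi(S(x))=(-1)^{(d-1)m}\big(1+(-1)^{d-1}\big)$; a parity check finishes it, since for $d-1$ even the first factor is $1$ and for $d-1$ odd the second factor is $0$, so in both cases $w_m(S(x))=1+(-1)^{d-1}$. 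Finally, for $w_m(B(x))$ I would invoke the Local valuation lemma $w_m(B(x))=w_m(U(x))-(-1)^m w_m(S(x))$ and substitute: $w_m(B(x))=(-1)^{dm}-(-1)^m\big(1+(-1)^{d-1}\big)$, which a short split on the parity of $d$ turns into $(-1)^{d(m+1)}$ (it equals $1$ when $d$ is even and $(-1)^{m+1}$ when $d$ is odd).

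The step I expect to be the obstacle is the sphere value $w_m(S(x))$. One is tempted to quote ``$w_m=w_1$ on closed manifolds'' directly, but that global identity is itself a consequence of the present lemma (via Gauss--Bonnet, $w_m(M)=\sum_x w(x)w_m(U(x))=(-1)^{dm}\chi(M)$, which equals $\chi(M)$ because $\chi(M)=0$ for odd $d$), so using it here would be circular. Expanding $w_m(S(x))$ by the one-particle formula avoids this, because it only feeds back the star formula for the lower-dimensional manifold $S(x)$, and that formula was proved using nothing about $w_m$ for $m\ge 2$ --- only the ordinary Euler characteristic of balls and spheres. A secondary point to pin down is that $B(x)=\overline{U(x)}$ really is a $d$-ball, so that $\chi(B(x))=1$; if one prefers not to quote this, one can instead compute $\chi(U(x))$ from $\chi(U(x))=(-1)^{\dim x}\big(1-\chi(\mathrm{lk}(x))\big)$ together with the join decomposition of $S(x)$ into the boundary of the simplex $x$ and the classical link $\mathrm{lk}(x)$, which again yields $(-1)^d$.
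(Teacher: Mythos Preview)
Your argument is correct. The factorisation $w_m(U(x))=\chi(U(x))^m$ via closure of the star under intersections is clean and is exactly the right observation; your computation $\chi(U(x))=(-1)^d$ from $\chi(B(x))-\chi(S(x))$ is fine, and your treatment of $w_m(S(x))$ by running the one-particle energy formula \emph{inside} the $(d-1)$-manifold $S(x)$ and feeding back the star formula in one dimension lower is a legitimate (and non-circular) way to get $w_m(S(x))=1+(-1)^{d-1}$. The final step via the local valuation lemma is a correct case check.

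As for comparison with the paper: the paper states this lemma without proof. The sentence preceding it, ``We can verify $w_m(B(x)) = w_m(U(x)) -(-1)^m w_m(S(x))$ in the case of manifolds without boundary'', signals that the three values are meant to be checked directly and to be consistent with the local valuation identity, but no argument is written out. Your proof fills this gap in a way that is fully compatible with the paper's logical order (Energy theorem and Local valuation lemma precede this lemma; the manifold theorem $w_m(M)=w_1(M)$ follows from it), and your explicit remark about avoiding the circular shortcut through that manifold theorem is well taken.
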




\begin{thm}[Manifolds $M$]
(i) For even-dimensional manifolds with or without boundary 
$w_{m}(M) = w_{1}(M)$. 
(ii) For odd-dimensional manifolds with boundary 
$w_m(M)=w_1(M)-(-1)^m w_1(S)$. 
\end{thm}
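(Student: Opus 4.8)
The plan is to insert appropriate local data into the one-point Gauss--Bonnet formula $w_m(M)=\sum_{x\in M}w(x)\,w_m(U(x))$, treating interior and boundary simplices of $M$ separately and then reducing to classical Euler-characteristic facts. At an interior simplex $x$ --- one with $S(x)$ a $(d-1)$-sphere --- the Local Data Lemma already gives $w_m(U(x))=(-1)^{dm}$, and since that statement is purely local it applies unchanged to interior simplices of a manifold with boundary.

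Next I would compute $w_m(U(x))$ at a boundary simplex $x\in\delta M$, where $S(x)$ is a $(d-1)$-ball by definition and the closed star $B(x)$ is a $d$-ball --- it is the join $K(x)\oplus\operatorname{lk}(x)$ of the closed simplex $K(x)$ with the link of $x$, both balls, so by the join rules for the ball/sphere monoid $B(x)$ is a $d$-ball. I would then use the value $w_m(\text{$e$-ball})=(-1)^{e(m+1)}$, which follows from the Local Data Lemma (a unit ball is an $e$-ball) together with the homeomorphism invariance of $w_m$ supplied by the Valuation Lemma, or by a direct induction on $e$. Feeding $w_m(B(x))=(-1)^{d(m+1)}$ and $w_m(S(x))=(-1)^{(d-1)(m+1)}$ into the Local Valuation Lemma $w_m(U(x))=w_m(B(x))+(-1)^m w_m(S(x))$ and simplifying the exponent $m+(d-1)(m+1)=d(m+1)-1$, the two contributions cancel: $w_m(U(x))=0$ at every boundary simplex.

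Assembling, $w_m(M)=(-1)^{dm}\sum_{x\in\operatorname{int}M}w(x)+0=(-1)^{dm}\,w_1(\operatorname{int}M)$, and by additivity of Euler characteristic on the disjoint decomposition $M=\operatorname{int}M\sqcup S$ this equals $(-1)^{dm}\bigl(w_1(M)-w_1(S)\bigr)$. For $d$ even, $(-1)^{dm}=1$ while $S=\delta M$ is a closed manifold of odd dimension $d-1$, so the sphere formula (its unit spheres are $(d-2)$-spheres of constant characteristic $2\neq0$) gives $w_1(S)=0$ and hence $w_m(M)=w_1(M)$, proving (i). For $d$ odd, $(-1)^{dm}=(-1)^m$, and I would invoke the identity $2w_1(M)=w_1(S)$ for odd-dimensional manifolds with boundary --- obtained by doubling $M$ across $S$ to a closed odd-dimensional manifold of vanishing characteristic --- to put $(-1)^m\bigl(w_1(M)-w_1(S)\bigr)$ into the form $w_1(M)-(-1)^m w_1(S)$ asserted in (ii); when $\delta M=\emptyset$ both sides vanish.

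The main obstacle is the boundary computation: one needs the join calculus for balls and spheres precise enough to identify $B(x)$ as a $d$-ball and $S(x)$ as a $(d-1)$-ball, and one needs the clean closed form $w_m(\text{$e$-ball})=(-1)^{e(m+1)}$, whose only ``free'' source is the Local Data Lemma for honest unit balls --- passing to arbitrary balls of that dimension requires the homeomorphism invariance of $w_m$ (or the induction). Once $w_m(U(x))=0$ on $\delta M$ is secured, the rest is the valuation bookkeeping above together with the two standard facts that odd-dimensional closed manifolds have vanishing Euler characteristic and that $\chi(M)=\tfrac12\chi(\partial M)$ for odd-dimensional manifolds with boundary.
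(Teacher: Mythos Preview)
Your route to (i) is the paper's route: feed the local star values $w_m(U(x))=(-1)^{dm}=1$ (interior) and $w_m(U(x))=0$ (boundary) into the one-point energy formula. You are more careful than the paper in noting that $\sum_{x\in\operatorname{int}M}w(x)=w_1(M)$ needs $w_1(S)=0$, which you get from $S=\delta M$ being a closed odd-dimensional manifold; the paper glosses over this but the argument is the same.

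For (ii) your derivation is correct through $w_m(M)=(-1)^m\bigl(w_1(M)-w_1(S)\bigr)$, but the last step---rewriting this as $w_1(M)-(-1)^m w_1(S)$ via the doubling identity $w_1(S)=2w_1(M)$---is an arithmetic slip that cannot be fixed, because the target identity is itself wrong for odd $m$. Substituting $w_1(S)=2w_1(M)$ gives $(-1)^m(w_1(M)-w_1(S))=(-1)^{m+1}w_1(M)$, while $w_1(M)-(-1)^m w_1(S)=(1-2(-1)^m)\,w_1(M)$; these agree only for even $m$. The case $m=1$ already exposes the issue: formula (ii) would read $w_1(M)=w_1(M)+w_1(S)$, forcing $w_1(\delta M)=0$, which fails for the closed interval. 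So your intermediate expression $(-1)^m\bigl(w_1(M)-w_1(S)\bigr)$ is the correct general statement for odd $d$, and the paper's stated (ii) holds only for even $m$. The paper's own proof of (ii) slips in the same place: it switches to the ball formula and asserts $w_m(B(x))=1$ at interior points and $-(-1)^m$ at boundary points, but for odd $d$ the closed star $B(x)$ is a $d$-ball at \emph{both} interior and boundary simplices, so both values equal $(-1)^{d(m+1)}=(-1)^{m+1}$.

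One smaller gap to tighten: to obtain $w_m(e\text{-ball})=(-1)^{e(m+1)}$ for the balls $B(x)$ and $S(x)$ arising at a boundary simplex you invoke homeomorphism invariance of $w_m$. In the paper that invariance theorem is deduced \emph{from} the present manifold theorem, so this route is circular. Your alternative ``direct induction on $e$'' is the right escape, but it needs to be carried out---the Local Data Lemma as stated only covers unit balls $B(x)$ inside a \emph{closed} manifold, and a boundary link $S(x)$ is not of that form. One clean way is to prove the ball value and the boundary vanishing $w_m(U(x))=0$ simultaneously by induction on the dimension, using the local valuation lemma together with the sphere values $w_m((e{-}1)\text{-sphere})=1+(-1)^{e-1}$ from the Local Data Lemma.
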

\begin{proof}
For even dimensional manifolds with boundary $S$,
we have $w_m(U(x))=1$ in the interior and $w_m(U(x))=0$ at 
the boundary, so that the formula $w_m(M) = \sum_x \omega(x) w_m(U)$ 
immediately shows that $w_m(M) = \sum_x w(x) = w_1(M)$.
This also works for odd-dimensional manifolds without boundary. 
For odd-dimensional manifolds with boundary $S$, we 
have $w_m(B(x))=1$ in the interior and $w_m(B(x))=-(-1)^m$
at the boundary. So $w_m(M)=w_1(M)-(-1)^m w_1(S)$ with or without boundary. \\
In the special case if the manifold has no boundary, we also can use
the energy theorem $w_m(G) = \sum_x w(x) w_m(U(x))$, 
The previous lemma $w_m(U(x))=w(x)^m =(-1)^{dm}$ shows that
if $d$ is even, then $w_m(U(x))=1$ and so
$w_m(G)=\sum_x w(x) = w_1(G)$. If $d$ is odd, then
$w_1(G)= \sum_x w(x) = 0$ and this does not change
when multiplying with the constant $w_m(x)=(-1)^{d m}$.
\end{proof} 

\section{Topological invariance}

\paragraph{}
A simplex $x$ is called {\bf locally maximal} if it is not contained in any 
larger simplex. This means that the star $U(x) = \{ x\}$ of a locally 
maximal point therefore has only one point $x$ in $G$. 
A map $f: G \to H$ between simplicial complexes is called {\bf continuous} 
if $f^{-1}(U)$ is open in $G$ if $U$ is open in $H$.  
A complex $H$ is a {\bf topological image} of $G$
if there exists a continuous $f:G \to H$ such that $f^{-1}(S(x))$ is 
homeomorphic to $S(x)$ for all $x$ and $f^{-1}(B(x))$ is a $dim(x)$-ball 
for every unit ball $B(x)$ with locally maximal $x$ in $H$. 
If $G$ and $H$ are both topological images of each other, the two 
complexes are called {\bf homeomorphic}. 
Also this definition is recursive. It relies on homeomorphism in 
dimension $(d-1)$. Quantities which are constant on homeomorphism
classes are called {\bf topological invariants}.

\paragraph{}
In comparison, a quantity is a {\bf combinatorial invariant} 
if it stays the same under {\bf Barycentric refinements}.
We have in this context say what we mean with the Barycentric refinement
of an open set. We can not just take the Whitney complex of the incidence
graph as in the case of closed sets because the Whitney complex is a 
simplicial complex. With that definition, the Barycentric refinement of an
open set would be a closed set. What we can do is to see an open set $U$
of a complex $G$ and define $U_1$ as the complement of the 
Barycentric refinement $(G \setminus U)_1$ in the Barycentric refinement 
$G_1$ of $G$.  Bott defined {\bf combinatorial invariant} as a quantity that
does not change when making Barycentric refinements. For all $m \geq 1$ we have:

\begin{thm}[Invariance]
$w_m$ are topological invariants.
$w_m$ are combinatorial invariants.
\end{thm}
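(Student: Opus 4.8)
The plan is to establish combinatorial invariance first, then topological invariance, since the latter reduces to the former plus the valuation machinery already in hand. For combinatorial invariance, I would argue that $w_m$ is a valuation (in the sense of the multilinear extension described in the Notations section), and then invoke the standard fact that any valuation on simplicial complexes which is invariant under refinement is determined by its values on simplices. More concretely, I would use the Gauss--Bonnet / one-particle energy formula $w_m(G) = \sum_{x \in G} w(x) w_m(U(x))$ together with the fact that each $U(x)$, and more importantly each unit ball $B(x)$, behaves predictably under Barycentric refinement. The cleanest route is probably: show that $w_m(G_1) = w_m(G)$ by a direct bookkeeping argument, pairing simplices of $G_1$ (which are flags $x_0 \subsetneq \cdots \subsetneq x_r$ in $G$) and using that $w$ of such a flag is $(-1)^r$, so that the $m$-fold sum over flags with nonempty intersection reorganizes into the original $m$-fold sum over $G$. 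Since $h(x) = w(x)$ is the essentially unique refinement-invariant local weight (as the excerpt notes), this is the natural weight to make the bookkeeping close up.

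For topological invariance I would use the Valuation Lemma ($w_m$ is additive on open sets) as the central tool. The definition of homeomorphism in the excerpt is recursive: $H$ is a topological image of $G$ via a continuous $f : G_m \to H$ with $f^{-1}(S(x))$ homeomorphic to $S(x)$ and $f^{-1}(B(x))$ a $\dim(x)$-ball over locally maximal $x$. So I would induct on the maximal dimension of $G$. The base case (dimension $0$, a discrete set of points) is trivial since $w_m$ of a $0$-complex with $n$ vertices is just $n$. For the inductive step, cover $H$ by the open stars $U(x)$ over locally maximal $x$ (together with their intersections), pull everything back along $f$, and use that $w_m$ is a valuation on \emph{open} sets on both sides: the inclusion--exclusion formula for $w_m$ over this cover on $H$ matches term-by-term the corresponding formula on $G_m$, because on each piece the relevant $w_m$ value is controlled by the lower-dimensional data ($S(x)$ and the ball $B(x)$) where the inductive hypothesis and the local valuation formula $w_m(B(x)) = w_m(U(x)) - (-1)^m w_m(S(x))$ apply. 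Finally, combinatorial invariance gives $w_m(G_m) = w_m(G)$, closing the loop.

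The main obstacle I expect is making the cover/inclusion--exclusion argument on $H$ genuinely rigorous: the open stars $U(x)$ of locally maximal simplices do cover $H$, but their pairwise and higher intersections are the open sets $U(X)$ for $X \in H^k$, and I must check that $f$ pulls these back compatibly and that $w_m$ of each such intersection on $H$ equals $w_m$ of its preimage on $G_m$ — i.e. the induction has to be set up so that ``$f^{-1}$ of an open piece is again a topological image of that piece one dimension down'' is literally what the recursive definition delivers. There is a real risk of circularity here, since the definition of topological image itself refers to balls and spheres, which are themselves defined recursively; so the induction must be on maximal dimension with the statement ``$w_m$ agrees on all homeomorphic complexes of dimension $\le d$'' as the hypothesis, and one must verify the step uses only dimension-$(d-1)$ instances. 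A secondary, more technical point is handling the case where $X$ has repeated entries or where $U(X)$ is nonempty for non-intersecting simplices (the phenomenon flagged in the Energy theorem section); but since the valuation lemma is stated for arbitrary open $U, V$ this should not actually obstruct anything, it just means the bookkeeping is over open sets rather than over simplices.
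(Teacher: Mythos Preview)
Your plan diverges from the paper on both halves, and the topological-invariance half contains a concrete error.

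For combinatorial invariance the paper does not argue by flag bookkeeping. It defers to the Product Theorem of the following section: since $G_1 = G \cdot 1$ and $w_m(G \cdot H) = w_m(G)\,w_m(H)$, one gets $w_m(G_1) = w_m(G)\,w_m(1) = w_m(G)$ in a single line. Your direct pairing of flags in $G_1$ might be made to work, but it is strictly more labor than the paper's route and you have not actually carried it out.

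For topological invariance your proposed open cover fails immediately: if $x$ is locally maximal then $U(x) = \{x\}$ is a singleton, so the family $\{U(x) : x \text{ locally maximal in } H\}$ covers only the locally maximal simplices, never all of $H$. Switching to vertex stars would give a genuine cover, but then the ball hypothesis in the definition of homeomorphism---which is imposed only at locally maximal $x$---no longer matches your pieces. More seriously, your inductive scheme tacitly assumes that $w_m$ takes the same value on any two $d$-balls (so that $w_m(B(x))$ agrees with $w_m(f^{-1}(B(x)))$), yet valuation plus lower-dimensional control of spheres does not by itself produce this. The paper supplies exactly this missing ingredient by invoking the Manifolds theorem: a $d$-ball is a manifold with boundary, and the explicit formula there shows $w_m(B)$ depends only on $d$ and the parity of $m$. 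With that in hand the paper uses the ball formula $w_m(H) = \sum_x w(x)\,w_m(B(x))$ rather than an open-cover inclusion--exclusion, and compares it to the analogous sum on the $G$ side. That manifold computation for balls is the key lemma your outline lacks.
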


\begin{proof}
First check that $w_m$ is the same on all d-balls $B$. This follows from the 
general manifold formula because $B$ is a manifold with boundary. For odd $m$
we have $w_m(B)=w_1(B)$. For even $m$, we have $w_m(B)=w_1(B)-w_1(S)$, where $S$
is the boundary. The fact that $w_m$ is invariant under Barycentric refinement
is treated in the next section about the topological product. 
Given two general $G,H$ which are homeomorphic. By definition, there is
a continuous map from $G_n$ to $H$.
Now use the Ball formula $w_m(H) = \sum_x w(x) w_m(B(x))$. 
We claim that this is the same than $w_m(G) = \sum_{f^{-1}(B(x))} w(x) w_m(f^{-1}(B(x)))$
\end{proof}

\paragraph{}
The property of being a {\bf Dehn-Sommerville space} (as defined above again) is
topological. If $G,H$ are homeomorphic and $G$ is Dehn-Sommerville,
then $H$ is Dehn-Sommerville. The proof can be done by induction. 
Having verified it for dimension up to $d$, then we have  it for 
dimension $d+1$ because there is a topological correspondence of 
unit spheres making sure that all unit spheres are Dehn-Sommerville. 

\paragraph{}
As an other question we should mention how to define homotopy for open 
sets.  One idea two call two open sets $U,V$ to be
homotopic in $G$, if $U^c = G \setminus U$ and $V^c$ are homotopic closed sets.
But we would like to have that if $U,V$ are homotopic, then the Betti vectors 
$b(U)$ and $b(V)$ should be the same. An open 3-ball for example has Betti vector 
$(0,0,0,1)$ and a closed 3-ball has Betti vector $(1,0,0,0)$. They should already not
be considered homotopic because the Euler characteristic does not match. All this
will hopefully be explored more in a future paper on the matter. 

\section{Topological product}

\paragraph{}
The {\bf topological product} $G \cdot H$ of two simplicial complexes $G,H$ is the Whitney 
complex of the graph in which the Cartesian product $G \times H$ are the vertices and where $(a,b)$
and $(c,d)$ are connected either $a \subset b, c \subset d$ or
if $b \subset a, d \subset c$. It is a set of subsets of $V=G \times H$ and again 
a simplicial complex. 

\paragraph{}
Unlike other products like the {\bf Shannon product}, the topological product is of a 
topological nature. It preserves manifolds and as we will see is compatible with all 
higher characteristics. 

\begin{lemma}
If $G$ and $H$ are manifolds, then $G \cdot H$ is a manifold. 
\end{lemma}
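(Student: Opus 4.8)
The plan is to verify the defining property directly: for every simplex $s$ in the topological product $G \cdot H$, the unit sphere $S_{G\cdot H}(s)$ is a sphere of dimension $\dim(G) + \dim(H) - \dim(s) - 1$. The natural tool is the behavior of unit spheres under the join operation, which is already recorded in the excerpt ($S_{G+H}(x) = S_G(x) + H$ for $x \in G$, and the join of a $p$-sphere with a $q$-sphere is a $(p+q+1)$-sphere). So the first step is to understand the local structure of $G \cdot H$ at a vertex $(a,b) \in G \times H$ and at a general simplex, and to show that the unit sphere there decomposes, up to the homotopy/homeomorphism framework of the paper, as a join of local data coming from $G$ and from $H$.

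First I would set up the local picture. A simplex of $G \cdot H$ is a chain-like configuration in the incidence graph on $G \times H$; concretely, after reordering, a simplex of $G \cdot H$ corresponds to a pair of flags, one in $G$ and one in $H$, glued along their common length. Recall that the Barycentric refinement $G_1$ of $G$ is exactly the Whitney complex of the incidence graph of $G$, and that $G_1$ is a manifold of the same dimension as $G$ whenever $G$ is (this is the kind of statement the paper treats as known, and it is what makes Barycentric invariance work). The key reduction is therefore: show that the link of a simplex in $G \cdot H$ is, as a complex, a join of a link in $G_1$ (or a ball/sphere built from one) with a link in $H_1$. Since a simplex $s \subset (a,b)\cdots$ sits over a simplex $s_G$ in $G_1$ and $s_H$ in $H_1$, I expect an identity of the shape
$$ S_{G\cdot H}(s) \;=\; S_{G_1}(s_G) \,+\, S_{H_1}(s_H) \,+\, (\text{a sphere from the gluing data}), $$
and then invoke that $G_1, H_1$ are manifolds so these links are spheres, and that a join of spheres is a sphere of the additive dimension. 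Tracking the dimensions: $\dim S_{G_1}(s_G) = \dim G - \dim s_G - 1$ and similarly for $H$, and the gluing contributes the correction so that the total comes out to $\dim(G\cdot H) - \dim(s) - 1$; since $\dim(G \cdot H) = \dim G + \dim H$ (the product of two manifolds has additive dimension, which should be checked first on maximal simplices), this is exactly the sphere condition.

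The main obstacle I anticipate is precisely this combinatorial decomposition of the link in $G \cdot H$ — getting the join structure and the dimension bookkeeping exactly right, including the boundary cases where $s$ projects to a simplex of the same length in both factors versus unequal lengths (i.e. when the two flags over which $s$ lies have different sizes), since those cases have genuinely different local pictures. A secondary, softer point is that "sphere" here is the recursive combinatorial notion from the excerpt, so at each inductive step I must produce an explicit vertex $v$ with $(G\cdot H) \setminus U(v)$ contractible and all unit spheres $(d-1)$-spheres; the cleanest route is induction on $\dim G + \dim H$, using the product structure of links to reduce to the inductive hypothesis, together with the fact (established earlier) that the join of a $p$-sphere and a $q$-sphere is a $(p+q+1)$-sphere and that joins interact well with contractibility. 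I would also note that it suffices to prove the statement for $G_1 \cdot H_1$ — or rather, since $w_m$ and the manifold property are combinatorial invariants, I may freely pass to Barycentric refinements of either factor, which makes the "pair of flags" description of simplices cleaner and is likely how the author intends the argument to go, as this lemma is placed in the section whose purpose is to prove Barycentric invariance of $w_m$.
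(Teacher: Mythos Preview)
Your plan heads in a reasonable direction but diverges from the paper's argument in a way worth flagging, and your central expected identity is not quite what actually holds.

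The paper does \emph{not} attempt a direct join decomposition of the link. Instead it works only at vertices $(x,y)\in G\times H$ (which suffices because $G\cdot H$ is a Whitney complex) and uses the Leibniz-type boundary decomposition
\[
S\big((x,y)\big)\;=\;\bigl(S(x)\cdot B(y)\bigr)\ \cup\ \bigl(B(x)\cdot S(y)\bigr),
\]
two ``cylinders'' glued along $S(x)\cdot S(y)$. Only after collapsing the contractible ball $B(y)$ to a point does one see that $S((x,y))$ is homotopic to the join $S(x)+S(y)$, which is a sphere; the paper then finishes by a case-by-case check that each point of $S((x,y))$ has a $(d-2)$-sphere as its own unit sphere (interior points of the cylinders by induction, points on the overlap $S(x)\cdot S(y)$ as two $(d-2)$-balls glued along a $(d-3)$-sphere).

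Your hoped-for identity $S_{G\cdot H}(s)=S_{G_1}(s_G)+S_{H_1}(s_H)+(\text{correction sphere})$ does not hold combinatorially: the link is genuinely the gluing of two product pieces, not a join, and the join appears only after a collapse. So if you pursue your route you will be forced back into exactly the cylinder analysis the paper does. Your flag/projection viewpoint and the passage to Barycentric refinements are not needed and add bookkeeping without payoff here; the paper's argument stays at the level of vertices $(x,y)$ and the product-of-balls picture, which is both shorter and what makes the induction transparent.
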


\begin{proof}
Lets outline again the argument:
we have to look at the unit sphere $S( (x,y) )$ of a point $(x,y)$ 
and show that it is a $d-1$-sphere. 
$S( (x,y) )$ is the union of two cylinders $S(x) \cdot B(y)$ 
and $B(x) \cdot S(y)$ which are glued together along 
$S(x) \cdot S(y)$. In the case of a product of a $1$-manifold  $G$
and a $1$-manifold $H$ for example, the unit spheres $S(x,y)$ is a
"square" which is the union of $S(x) \times B(y)$ (left right parts)
$B(x) \times S(y)$ (top bottom part) intersecting in $4$ points
$S(x) \times S(y)$. To see that this is a sphere, collapse $B(y)$ 
to a point to see that $S(x,y)$ is homotopic to the join of $S(y)$
and $S(x)$ which is a sphere. One then checks case by case that each 
point in $S(x,y)$ has a $d-2$ sphere as unit sphere. For interior
points in $B(y)$ or $B(x)$ this follows by induction. For 
points in $S(x) \times S(y)$ the unit sphere is a copy of two $(d-2)$
balls glued along a $(d-3)$ sphere and so a $(d-2)$ sphere. 
\end{proof} 

\paragraph{}
Similarly, one has by analyzing additionally the unit spheres of boundary 
points and knowing that the join of a ball with a sphere is a ball:

\begin{lemma}
If $G,H$ are manifolds with boundary then $G \cdot H$ is a 
manifold with boundary. 
\end{lemma}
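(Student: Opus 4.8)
The plan is to extend the proof of the previous lemma, now keeping track of which points of $G\cdot H$ are boundary points. Fix a $p$-manifold with boundary $G$ and a $q$-manifold with boundary $H$, set $d=p+q$, and induct on $d$ (the cases $d\le 1$ are immediate, since a $1$-manifold with boundary is a disjoint union of intervals and circles). As before it suffices to examine one unit sphere $S((x,y))$ of $G\cdot H$ and show it is a $(d-1)$-sphere or a $(d-1)$-ball; in the course of this one also identifies $\delta(G\cdot H)=\delta G\cdot H\ \cup\ G\cdot\delta H$, so that the boundary points of the product are exactly those with $x\in\delta G$ or $y\in\delta H$.

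The input is the same decomposition used before: $S((x,y))=S_G(x)\cdot B_H(y)\ \cup\ B_G(x)\cdot S_H(y)$, the two pieces being glued along $S_G(x)\cdot S_H(y)$. Here $B_G(x)$ and $B_H(y)$ are always balls, each being the cone over $S_G(x)$ resp.\ $S_H(y)$ and the cone over a sphere or a ball being a ball of one higher dimension; $S_G(x)$ is a $(p-1)$-sphere if $x$ is interior to $G$ and a $(p-1)$-ball if $x\in\delta G$, and likewise for $S_H(y)$. If $x$ and $y$ are both interior, the data $S_G(x),S_H(y)$ (spheres) and $B_G(x),B_H(y)$ (balls) is exactly as in the previous lemma, so its local argument shows $S((x,y))$ is a $(d-1)$-sphere and nothing new is needed; by symmetry the remaining case is $y\in\delta H$, which we split further according to whether $x$ is interior or lies in $\delta G$.

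Assume $y\in\delta H$, so $S_H(y)$ is a $(q-1)$-ball. If $x\in\delta G$ too, then $S_G(x),B_G(x),S_H(y),B_H(y)$ are all balls, hence both pieces $S_G(x)\cdot B_H(y)$ and $B_G(x)\cdot S_H(y)$ are $(d-1)$-balls (by the topological-product analogue of the join-is-a-ball statements quoted above, itself provable by the same induction); they are glued along the $(d-2)$-ball $S_G(x)\cdot S_H(y)$, which sits in the boundary sphere of each, and a union of two $(d-1)$-balls along a $(d-2)$-ball in their boundaries is a $(d-1)$-ball. If instead $x$ is interior, then $B_G(x)\cdot S_H(y)$ is a ball$\,\cdot\,$ball, hence a $(d-1)$-ball, while $S_G(x)\cdot B_H(y)$ is a sphere$\,\cdot\,$ball, a $(d-1)$-manifold with boundary $S_G(x)\cdot\partial B_H(y)$, a $(d-2)$-sphere; the two are glued along the $(d-2)$-cylinder $S_G(x)\cdot S_H(y)$, which lies in the boundary of each. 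Collapsing the contractible factor $B_H(y)$ shows $S((x,y))$ is homotopy equivalent to the join $S_G(x)\ast S_H(y)$, hence contractible; checking the links of vertices of $S((x,y))$ one at a time (interior links are $(d-2)$-spheres, boundary links are $(d-2)$-balls, by induction and the two-balls-glued lemmas) shows $S((x,y))$ is a $(d-1)$-manifold with boundary; and computing $\delta S((x,y))$ from the boundaries of the two pieces shows that boundary is a $(d-2)$-sphere. A collar argument then identifies $S((x,y))$ with a standard $(d-1)$-ball, closing the induction.

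The hard part is this last gluing when $x$ is interior: recognizing the union of the cylinder $S_G(x)\cdot B_H(y)$ with the ball $B_G(x)\cdot S_H(y)$ along the cylinder $S_G(x)\cdot S_H(y)$ as a genuine $(d-1)$-ball and not merely as a contractible manifold with boundary. The clean way to organize it is a collar argument: view $S_G(x)\cdot B_H(y)$ as a product collar $(S_G(x)\cdot S_H(y))\cdot[0,1]$ attached to the ball $B_G(x)\cdot S_H(y)$ along a piece of its boundary sphere, and use that attaching a collar does not change the homeomorphism type. Making this precise, together with the bookkeeping used throughout, rests on the identity $\delta(A\cdot B)=\delta A\cdot B\ \cup\ A\cdot\delta B$ for topological products of manifolds with boundary, which should be isolated as an auxiliary lemma and proved alongside this one.
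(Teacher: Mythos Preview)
Your proposal follows the same decomposition $S((x,y))=S_G(x)\cdot B_H(y)\cup B_G(x)\cdot S_H(y)$ glued along $S_G(x)\cdot S_H(y)$ and the same case analysis on whether $x,y$ are interior or boundary points that the paper has in mind; the paper's own argument is a one-sentence pointer back to the previous lemma together with the extra ingredient that \emph{the join of a ball with a sphere is a ball}. The only methodological difference is in your mixed case ($x$ interior, $y\in\delta H$): the paper would collapse the contractible factor $B_H(y)$ to identify $S((x,y))$ up to homotopy with the join $S_G(x)\ast S_H(y)$, which is a ball since $S_H(y)$ is a ball, and then verify the link condition pointwise; you instead organize that step as a collar argument. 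Both are legitimate, and your write-up is far more detailed than the paper's sketch; your isolation of $\delta(G\cdot H)=\delta G\cdot H\cup G\cdot\delta H$ as an auxiliary lemma is a useful addition the paper leaves implicit.
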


For example, the product of two closed intervals is a square, the product
of an interval with a circle is a closed cylinder. 

\paragraph{}
The topological product is not associative. 
The complex $G \cdot 1$ is the Barycentric refinement $G_1$ of $G$ so that
$G \cdot (1 \cdot 1)=G_1$ but $(G \cdot 1) \cdot 1 = G_2$ is the 
second Barycentric refinement. 

\paragraph{}
There is an algebraic {\bf Stanley-Reisner} description (which is seen implemented in the 
code section). 
If the elements in $V=\bigcup_{x \in G} x$ are labeled with variables $t_1,\dots,t_q$, 
then every $x \in G$ can be written as a monomial $t(x)=t_{j_0} t_{j_2} \dots t_{j_{{\rm dim}(x)}}$ 
and $x \subset y$ is algebraically encoded by $t(x) | t(y)$. The complex $G$ is a polynomial 
$\sum_{x \in G} t(x)$. In the same way, the complex $H$ is given by the polynomial $\sum_{y \in H} t(y)$.
The product $G \cdot H$ is now represented by the product of these two polynomials. 
The vertex set of $G \cdot H$ has $|G| |H|$ elements. 

\paragraph{}
For example, if $G=\{\{1\},\{2\},\{1,2\}\}$ and $H=\{ \{3\},\{4\},\{3,4\} \}$.
The product complex $G \cdot H$ is a complex for which the vertex set has $9$ 
elements. 

\begin{thm}[Product]
$w_m(G \cdot H)=w_m(G) w_m(H)$.
\end{thm}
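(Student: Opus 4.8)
The plan is to combine the energy formula (the one-particle Gauss--Bonnet form $w_m(G)=\sum_{x\in G} w(x)\,w_m(U(x))$) with the structure of stars in the topological product, reducing everything to the manifold-free computation of $w_m$ on products of stars. First I would recall the Stanley--Reisner picture: a vertex of $G\cdot H$ is a pair $(a,b)$ with $a\in G$, $b\in H$, and the faces of $G\cdot H$ are chains that are increasing in the first coordinate and decreasing in the second (or vice versa). The crucial local fact I would establish is that the star $U_{G\cdot H}((a,b))$ factors, in the sense that its $m$-th characteristic splits as $w_m(U_{G\cdot H}((a,b)))=w_m(U_G(a))\cdot w_m(U_H(b))$. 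Indeed $U_{G\cdot H}((a,b))$ consists of all faces of $G\cdot H$ all of whose vertices $(c,d)$ satisfy $a\subset c$ and $d\subset b$ (the direction forced by the product's adjacency convention relative to the chosen orientation at $(a,b)$); this set is, combinatorially, the topological product $U_G(a)\cdot K_H(b)$ where $K_H(b)=\overline{\{b\}}$ is the core. Since the core $K_H(b)$ is a simplex hence contractible-looking but more to the point has $w_m(K_H(b))=w_m(\mathrm{simplex})$, and since $U_G(a)$ likewise is a cone with apex $a$, I would show both factors are ``$m$-fully contractible'' for the relevant bookkeeping and that the product characteristic multiplies.

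The cleanest route, though, is probably an induction on $|G|+|H|$ using the valuation lemma rather than a direct local computation. The plan: pick a locally maximal vertex $v$ of $G$, write $G=G'\cup \overline{\{\sigma\}}$ as a union of the closed subcomplex $G'=G\setminus U(v')$ (for a suitable maximal face) with a star-closure, so that $G\cdot H = (G'\cdot H)\cup(\text{star piece}\cdot H)$ with intersection $(G'\cap\text{star})\cdot H$. Because $w_m$ is a valuation on open sets (Valuation Lemma) and $(-)\cdot H$ sends unions/intersections of open sets of $G$ to unions/intersections of open sets of $G\cdot H$ — this is the key structural claim to verify — the functional $U\mapsto w_m(U\cdot H)$ is itself a valuation in $U$. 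A valuation on the distributive lattice of open sets of $G$ is determined by its values on the minimal open sets $U(x)$, i.e. on stars. So it suffices to prove $w_m(U_G(x)\cdot H)=w_m(U_G(x))\,w_m(H)$ for a single star, and by the same argument in the $H$ slot, it reduces to $w_m(U_G(x)\cdot U_H(y))=w_m(U_G(x))\,w_m(U_H(y))$, a statement about a product of two cones, which can be checked by a direct face count or by a second, now terminating, induction since a cone removes one vertex.

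For that base computation, I would use that $U_G(x)$ deformation-retracts onto the single open point $\{x\}$ through open sets (removing, one maximal simplex at a time, vertices other than those of $x$), and that $w_m$ of a single open point $\{x\}$ equals $w(x)^m$ by the defining sum (the only tuple $X\in\{x\}^m$ has $\bigcap X=x\in\{x\}$). Tracking this retraction through the product and through the valuation identity gives $w_m(U_G(x)\cdot U_H(y))=w(x)^m w(y)^m = \big(w(x)w(y)\big)^m = w_m(U_{G\cdot H}((x,y)))$, where the last equality uses that $(x,y)$ is the corresponding point of $G\cdot H$ with dimension $\dim x+\dim y$ — wait, that is not quite right, since the product identifies the pair $(x,y)$ with a chain, not a single vertex; so instead the bookkeeping must go through the energy formula applied to $G\cdot H$ directly: $w_m(G\cdot H)=\sum_{(a,b)} w((a,b))\,w_m(U_{G\cdot H}((a,b)))$ and one sorts the double sum into an outer sum over $a\in G$ and inner sum over $b\in H$, using the factorization of the star characteristic.

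The main obstacle I expect is precisely pinning down that the star $U_{G\cdot H}(p)$ of a face $p$ of the product is itself a topological product of pieces coming from $G$ and $H$, and that $w_m$ of it factors — the product's adjacency rule (increasing in one coordinate, decreasing in the other) makes the ``link = join of links'' heuristic from the manifold lemma true only up to homotopy, whereas $w_m$ for $m\geq2$ is not a homotopy invariant, so a genuine combinatorial identity, not a homotopy equivalence, is needed. Establishing that identity — most likely by showing $(-)\cdot H$ is a lattice map on open sets and invoking the valuation principle, thereby never needing the delicate local geometry at all — is where the real work lies; the rest is reorganizing a double sum.
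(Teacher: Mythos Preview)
Your plan is quite different from the paper's and, as written, has a genuine gap. The paper argues directly from the defining sum: it writes $w_m(G)=\sum_{X\in G^m,\,\bigcap X\in G}\prod_j w(x_j)$, does the same for $H$, and then asserts (via the Stanley--Reisner polynomial description of the product) that the corresponding sum for $G\cdot H$ factors as the product of the two. No energy formula, no valuation lemma, no stars enter; it is a one-step manipulation of the definition.

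The gap in your route is the line $w_m(G\cdot H)=\sum_{(a,b)} w((a,b))\,w_m(U_{G\cdot H}((a,b)))$. The energy formula for a complex $K$ sums over \emph{all} simplices of $K$, not just its vertices. The simplices of $G\cdot H$ are chains in the poset $G\times H$; the pairs $(a,b)\in G\times H$ are only the $0$-simplices. So your ``outer sum over $a\in G$, inner sum over $b\in H$'' is indexed over the wrong set, and the hoped-for double-sum factorization never gets started. Your valuation-based alternative has the same kind of trouble one level up: you need $U\cdot H$ for an open $U\subset G$, but the topological product in this paper is defined only for simplicial complexes (closed sets), so the claim that $(-)\cdot H$ is a lattice map on open sets has no meaning as stated. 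You rightly flag that the star factorization you want is delicate because $w_m$ is not a homotopy invariant; the difficulty is in fact more basic, since the star of a general simplex of $G\cdot H$ is indexed by a chain, not a pair, and does not visibly split as a product of a piece from $G$ and a piece from $H$.
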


\begin{proof}
We have $w_m(G) = \sum_{x_1,\dots,x_m, \bigcap X>0} h(x_1) \cdots h(x_m) = \sum_X h(X)$. 
Similarly, $w_m(H) = \sum_{y_1,\dots,y_m, \bigcap Y>0} h(y_1) \cdots h(y_m) = \sum_Y h(Y)$.
Now, 
\begin{eqnarray*}
   w_m(G \cdot H) &=& \sum_{X,Y\bigcap X>0,\bigcap Y>0}  
    h(x_1) \cdots h(x_m) h(y_1) \cdots h(y_m) \\
    &=& \sum_{X,Y} h(X) h(Y)  \\
    &=& \sum_X h(X) \sum_Y h(Y) = w_m(X) w_m(Y)  \; . 
\end{eqnarray*}
\end{proof}

\paragraph{}
{\bf Remarks:} \\
{\bf 1)} A special case is if $H=1=\{\{1\} \}$, where $G_1=G \cdot 1$ is the 
Barycentric refinement. And $w_m(G_1)=w_m(G)$ is a special important case. \\
{\bf 2)} As in the case $m=1$, if $G$ is the Whitney complex of a finite simple
graph, we would like to know about the behavior of the curvature when taking 
products. In the case of the Shannon product and $m=1$, we have seen that the 
curvatures multiply. 

\vfill
\pagebreak

\section{Code}

\paragraph{}
Here is some code. As usual, one can copy-paste it from the ArXiv's 
LaTex source. The procedures should be pretty self-explanatory,
given that the Wolfram language allows to write mathematical procedures in a form 
resembling pseudo code. As for the topological product, we repeat the algebraic
frame work as we have used it \cite{KnillKuenneth}, (before even knowing about 
Stanley-Reisner rings). 

\begin{tiny}
\lstset{language=Mathematica} \lstset{frameround=fttt}
\begin{lstlisting}[frame=single]
Closure[A_]:=If[A=={},{},Delete[Union[Sort[Flatten[Map[Subsets,A],1]]],1]];
Boundary[A_]:=Complement[Closure[A],A];
Whitney[s_]:=If[Length[EdgeList[s]]==0,Map[{#}&,VertexList[s]],
   Map[Sort,Sort[Closure[FindClique[s,Infinity,All]]]]];
UU[G_,x_]:=Module[{U={}},
    Do[If[SubsetQ[G[[k]],x],U=Append[U,G[[k]]]],{k,Length[G]}];U];
VV[G_,x_]:=Module[{U={}},
    Do[If[SubsetQ[x,G[[k]]],U=Append[U,G[[k]]]],{k,Length[G]}];U];
Basis[G_]:=Table[UU[G,G[[k]]],{k,Length[G]}]; Stars=Basis; 
Cores[G_]:=Table[VV[G,G[[k]]],{k,Length[G]}]; 
SubBasis[G_]:=Module[{V=Union[Flatten[G]]},Table[UU[G,{V[[k]]}],{k,Length[V]}]];
UnitSpheres[G_]:=Module[{B=Basis[G]},
   Table[Complement[Closure[B[[k]]],B[[k]]],{k,Length[B]}]];
UnitBalls[G_]:=Map[Closure,Basis[G]];
Cl[U_,A_]:=Module[{V=U},Do[V=Union[Append[V,
   Union[V[[k]],A[[l]]]]],{k,Length[V]},{l,Length[A]}];V];
Topology[G_]:=Module[{V=B=Basis[G]},
   Do[V=Cl[V,B],{Length[Union[Flatten[G]]]}];Append[V,{}]];
GraphBasis[s_]:=Basis[Whitney[s]];
Nullity[Q_]:=Length[NullSpace[Q]];  sig[x_]:=Signature[x]; 
sig[x_,y_]:=If[SubsetQ[x,y]&&(Length[x]==Length[y]+1),
  sig[Prepend[y,Complement[x,y][[1]]]]*sig[x],0];
Fvector[G_]:=If[Length[G]==0,{},Delete[BinCounts[Map[Length,G]],1]];
Ffunction[G_,t_]:=Module[{f=Fvector[G],n},Clear[t]; n=Length[f];
  If[Length[VertexList[s]]==0,1,1+Sum[f[[k]]*t^k,{k,n}]]];
BarycentricGraph[s_]:=ToGraph[Whitney[s]]; 
BarycentricComplex[G_]:=Whitney[ToGraph[G]]; 
dim[x_]:=Length[x]-1; w[x_]:=(-1)^dim[x];  
Wu1[A_]:=Total[Map[w,A]]; Chi=Wu1;                        EulerChi=Wu1;
Wu2[A_]:=Module[{a=Length[A]},Sum[x=A[[k]]; Sum[y=A[[l]];
   If[MemberQ[A,Intersection[x,y]],1,0]*w[x]*w[y],{l,a}],{k,a}]];Wu=Wu2;
Wu3[A_]:=Module[{a=Length[A]},Sum[x=A[[k]];Sum[y=A[[l]];Sum[z=A[[o]];
   If[MemberQ[A,Intersection[x,y,z]],1,0]*
   w[x]*w[y]*w[z],{o,a}],{l,a}],{k,a}]];
RingFromComplex[G_,a_]:=Module[{V=Union[Flatten[G]],n,T,U},
    n=Length[V];Quiet[T=Table[V[[k]]->a[[k]],{k,n}]];
    Quiet[U=G /.T]; Sum[Product[U[[k,l]],
    {l,Length[U[[k]]]}],{k,Length[U]}]];
ComplexFromRing[f_]:=Module[{s,ff},s={}; ff=Expand[f];
    Do[Do[If[Denominator[ff[[k]]/ff[[l]]]==1 && k!=l,
      s=Append[s,k->l]], {k,Length[ff]}],{l,1,Length[ff]}];
    Whitney[UndirectedGraph[Graph[Range[Length[ff]],s]]]];
TopologicalProduct[G_,H_]:=Module[{},
    f=RingFromComplex[G,"a"];
    g=RingFromComplex[H,"b"]; F=Expand[f*g]; ComplexFromRing[F]];
\end{lstlisting}
\end{tiny}

\pagebreak

\paragraph{}
The following lines illustrate some of the identities for random 
complexes. 

\begin{tiny}
\lstset{language=Mathematica} \lstset{frameround=fttt}
\begin{lstlisting}[frame=single]
G=Whitney[RandomGraph[{9,15}]]; G = Sort[G]; G = Map[Sort, G]; n = Length[G];
U1=Basis[G]; U2=Table[Intersection[U1[[k]],U1[[l]]],{k,n},{l,n}];
U3=Table[Intersection[U1[[k]],U2[[l,m]]],{k,n},{l,n},{m,n}];
V1=Cores[G]; V2=Table[Intersection[V1[[k]],V1[[l]]],{k,n},{l,n}];
V3=Table[Intersection[V1[[k]],V2[[l,m]]],{k,n},{l,n},{m,n}];
S1=Map[Boundary,U1]; S2=Table[Boundary[U2[[k,l]]],{k,n},{l,n}];  
S3=Table[Boundary[U3[[k,l,m]]],{k,n},{l,n},{m,n}];
B1=Map[Closure,U1];  B2=Table[Closure[U2[[k,l]]],{k,n},{l,n}];   
B3=Table[Closure[U3[[k,l,m]]],{k,n},{l,n},{m,n}];

Print[" Check energy formulas   "];
Wu1[G]==Total[Table[w[G[[k]]]*Wu1[U1[[k]]],{k,n}]];
Wu2[G]==Total[Table[w[G[[k]]]*Wu2[U1[[k]]],{k,n}]];
Wu1[G]==Total[Flatten[Table[w[G[[k]]]*w[G[[l]]]*Wu1[U2[[k,l]]],{k,n},{l,n}]]]
Wu2[G]==Total[Flatten[Table[w[G[[k]]]*w[G[[l]]]*Wu2[U2[[k,l]]],{k,n},{l,n}]]]
Wu1[G]==Total[Flatten[Table[w[G[[k]]]*w[G[[l]]]*w[G[[m]]]*Wu1[U3[[k,l,m]]],
  {k,n},{l,n},{m,n}]]]
Wu2[G]==Total[Flatten[Table[w[G[[k]]]*w[G[[l]]]*w[G[[m]]]*Wu2[U3[[k,l,m]]],
  {k,n},{l,n},{m,n}]]]

Print[" Check energy ball formulas  "];
Wu1[G]==Total[Table[w[G[[k]]]*Wu1[B1[[k]]],{k,n}]]
Wu2[G]==Total[Table[w[G[[k]]]*Wu2[B1[[k]]],{k,n}]]
Wu1[G]==Total[Flatten[Table[w[G[[k]]]*w[G[[l]]]*Wu1[B2[[k,l]]],{k,n},{l,n}]]]
Wu2[G]==Total[Flatten[Table[w[G[[k]]]*w[G[[l]]]*Wu2[B2[[k,l]]],{k,n},{l,n}]]]
Wu1[G]==Total[Flatten[Table[w[G[[k]]]*w[G[[l]]]*w[G[[m]]]*Wu1[B3[[k,l,m]]],
  {k,n},{l,n},{m,n}]]]
Wu2[G]==Total[Flatten[Table[w[G[[k]]]*w[G[[l]]]*w[G[[m]]]*Wu2[B3[[k,l,m]]],
  {k,n},{l,n},{m,n}]]]

Print[" Check sphere formulas          "];
0==Total[Table[w[G[[k]]]*Wu1[S1[[k]]], {k, n}]]
0==Total[Table[w[G[[k]]]*Wu2[S1[[k]]], {k, n}]]
0==Total[Flatten[Table[w[G[[k]]]*w[G[[l]]]*Wu1[S2[[k,l]]],{k,n},{l,n}]]]
0==Total[Flatten[Table[w[G[[k]]]*w[G[[l]]]*Wu2[S2[[k,l]]],{k,n},{l,n}]]]
0==Total[Flatten[Table[w[G[[k]]]*w[G[[l]]]*w[G[[m]]]*Wu1[S3[[k,l,m]]],
   {k,n},{l,n},{m,n}]]]
0==Total[Flatten[Table[w[G[[k]]]*w[G[[l]]]*w[G[[m]]]*Wu2[S3[[k,l,m]]],
   {k,n},{l,n},{m,n}]]]

Print[" Check local valuation formula  "];
Map[Wu1,U1]==Map[Wu1,B1]-Map[Wu1,S1]
Union[Flatten[Table[Wu1[U2[[k,l]]]+Wu1[S2[[k,l]]]-Wu1[B2[[k,l]]],{k,n},{l,n}]]]
Map[Wu2,U1]==Map[Wu2,B1]+Map[Wu2,S1]
Union[Flatten[Table[Wu2[U2[[k,l]]]-Wu2[S2[[k,l]]]-Wu2[B2[[k,l]]],{k,n},{l,n}]]]

Print[" Check product  "];
G=Whitney[RandomGraph[{6,10}]]; H=Whitney[StarGraph[5]];
GH=TopologicalProduct[G,H]; 
{Wu1[G],Wu1[H],Wu1[GH]}
{Wu2[G],Wu2[H],Wu2[GH]}

(* A bit more time consuming to compute *)
Wu3[G]==Total[Table[w[G[[k]]]*Wu3[B1[[k]]],{k,n}]] 
Wu3[G]==Total[Flatten[Table[w[G[[k]]]*w[G[[l]]]*Wu3[B2[[k,l]]],{k,n},{l,n}]]]

\end{lstlisting}
\end{tiny}

\bibliographystyle{plain}

\end{document}